\pgfplotsset{compat=newest}
\newtheorem{theorem}{Theorem}[section]
\newtheorem{lemma}[theorem]{Lemma}
\newtheorem{definition}[theorem]{Definition}
\newtheorem{remark}[theorem]{Remark}
\newtheorem{proposition}[theorem]{Proposition}
\newenvironment{Proof}{\removelastskip\par\medskip 
	\noindent{\em Proof.}
	\rm}{\penalty-20\null\hfill$\square$\par\medbreak}
\newcommand{\N}{\mathbb{N}}
\newcommand{\R}{\mathbb{R}}
\newcommand{\e}{\varepsilon}
\newcommand{\C}{{\mathcal C}}
\newcommand{\Hau}{{\mathcal H}} 
\newcommand{\de}{\partial}
\newcommand{\spt}{\mathop{\mathrm{spt}}}
\newcommand{\loc}{\mathop{\mathrm{loc}}}
\newcommand{\difsim}{\Delta}
\newcommand{\ch}{\mathbf{1}}
\newcommand{\cB}{\mathcal{B}}
\renewcommand{\Subset}{\subset\!\subset}
\newcommand{\pr}{P_{\Omega}}
\newcommand{\prz}{P_{\Omega_{0}}}
\newcommand{\prt}{P_{\Omega_{t}}}
\newcommand{\p}{P}
\newcommand{\dd}{d}
\newcommand{\tP}{\widetilde{p}}
\newcommand{\tQ}{\widetilde{q}}
\newcommand{\tN}{\widetilde{N}}
\newcommand{\tS}{\widetilde{S}}
\newcommand{\talpha}{\widetilde{\alpha}}
\newcommand{\tbeta}{\widetilde{\beta}}
\newcommand{\tgamma}{\widetilde{\gamma}}
\newcommand{\tSigma}{\widetilde{\Sigma}}
\newcommand{\Tr}{{\mathrm{Tr}}}
	\let\div\relax
	\DeclareMathOperator{\div}{div}
\definecolor{grey}{rgb}{.7,.7,.7}
\definecolor{evidGP}{rgb}{0,0,1}
\definecolor{evidG}{rgb}{0,0.5,0}
\definecolor{almond}{rgb}{0.99, 0.99, 0.99}
\author{Gian Paolo Leonardi}
\address{Dipartimento di Matematica, via Sommarive 14, IT-38123 Povo - Trento (Italy)}
\email{gianpaolo.leonardi@unitn.it}
\author{Giacomo Vianello}
\address{Dipartimento di Matematica, via Sommarive 14, IT-38123 Povo - Trento (Italy)}
\email{giacomo.vianello-1@unitn.it}
\subjclass[2020]{Primary: 49Q05. Secondary: 49Q10}
\keywords{perimeter, almost-minimizers, stability, capillarity}
\title[A vertex-skipping property...]{A vertex-skipping property for almost-minimizers of the relative perimeter in convex sets}
\thanks{G.P.Leonardi has been partially supported by: PRIN 2017TEXA3H ``Gradient flows, Optimal Transport and Metric Measure Structures''; PRIN 2022PJ9EFL ``Geometric Measure Theory: Structure of Singular Measures, Regularity Theory and Applications in the Calculus of Variations'' (financed by European Union - Next Generation EU, Mission 4, Component 2 - CUP:E53D23005860006); Grant PID2020-118180GB-I00 ``Geometric Variational Problems''. Giacomo Vianello has been supported by GNAMPA (INdAM) Project 2023: ``Esistenza e propriet\`a fini di forme ottime''. The authors wish to thank the anonymous referees for very helpful comments on the first version of the paper, and in particular for pointing out a remarkable consequence of our main result in combination with \cite[Theorem 1.1]{EdelenLi2022}.}
\begin{document}
\begin{abstract}
Given a convex domain $\Omega\subset \R^{3}$ and an almost-minimizer $E$ of the relative perimeter in $\Omega$, we prove that the closure of $\de E \cap \Omega$ does not contain vertices of $\Omega$. 
\end{abstract}

\maketitle
	
	
\section{Introduction}
Given a convex open set $\Omega \subset \R^{n}$, $n\ge 2$, we aim to study the boundary behavior of local almost-minimizers of the relative perimeter in $\Omega$ near certain singular points of $\de \Omega$. 
More specifically, we take a measurable set $E\subset \Omega$ and, for any open set $A\subset \R^{n}$, we define the relative perimeter of $E$ in $\Omega$ restricted to $A$ as 
\[
\pr(E;A) := \p(E;\Omega\cap A)\,,
\]
with the short form $\pr(E) := \p(E;\Omega)$ when $A\supset \Omega$, and with $P(E;B)$ denoting the standard perimeter of $E$ in $B$ à la De Giorgi. 
We then say that $E$ is a local almost-minimizer of $\pr$ if, for any $x\in \overline{\Omega}$ there exists $r_{x} > 0$ such that, for any $0 < r < r_{x}$ and any measurable subset $F$ of $\Omega$ with $F \difsim E \subset \subset B_{r}(x)$, one has
\begin{equation}\label{eq:A-M}
\pr(E;B_{r}(x)) \leq \pr(F;B_{r}(x)) + |F \difsim E|^{\frac{n-1}{n}} \psi_{\Omega}(E;x,r) \, ,
\end{equation}
for a suitable function $\psi_{\Omega}(E;x,r)$ such that $\lim_{r\to 0^{+}}\psi_{\Omega}(E;x,r) = 0$. In particular, $E$ is a perimeter minimizer in $B_{r}(x)$ if and only if $\psi_{\Omega}(E;x,r) = 0$. 

Similar notions of almost-minimality are well-known in the literature, and the most relevant ones satisfy condition \eqref{eq:A-M}. For instance, the lambda-minimality, which characterizes solutions to isoperimetric and prescribed mean curvature problems, implies \eqref{eq:A-M} by choosing $\psi_{\Omega}(E;x,r) = \Lambda r\omega_{n}^{1/n}$. It is worth recalling that De Giorgi's interior regularity for perimeter minimizers is well-known to hold up to $C^{1,\alpha}$-regularity for almost-minimizers in the sense of Tamanini \cite{tamanini1982boundaries, Tamanini_quadLecce}, which in our case corresponds to \eqref{eq:A-M} combined with the extra summability of $r^{-1}\psi(E;x,r)$ on some interval $(0,r_{0})$, for any $x\in \Omega$ and with $r_{0}>0$ possibly depending on $x$. 

Further regularity results are known in the context of integral currents and varifolds, where instead of almost-minimality hypotheses one considers summability assumptions on the generalized mean curvature of the (associated) varifold \cite{allard1975boundary,allard1972first}. In particular, the problem of boundary regularity for integral $k$-varifolds with free boundary and mean curvature in $L^{p}$ with $p>k$, has been first tackled in \cite{gruter1986allard}. 

To state our main result, we need to introduce the notion of vertex of $\Omega$. Given $x_{0}\in \de\Omega$ we define the (open) tangent cone to $\Omega$ at $x_{0}$ as
\[
T_{x_{0}}\Omega = \lim_{t\to +\infty}\ t (\Omega - x_{0}) = \bigcup_{t>0}t (\Omega - x_{0})\,.
\]
We say that $x_{0}$ is a vertex of $\Omega$ if $T_{x_{0}}\Omega$ does not contain lines. Notice that this is equivalent to require that, up to isometries, $T_{x_{0}}\Omega$ cannot be written as $\R\times C$ for some convex cone $C$ in $\R^{n-1}$. That said, our main result is the following
\begin{theorem}[Vertex-skipping]\label{thm:VS}
Let $\Omega \subset \R^3$ be a open, convex set. Let $E$ be a local almost-minimizer of the relative perimeter in $\Omega$. Then $\overline{\de E\cap \Omega}$ does not contain vertices of $\Omega$.
\end{theorem}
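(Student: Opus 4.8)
The plan is to argue by contradiction, blowing up at a point that is simultaneously a vertex of $\Omega$ and a limit point of $\de E\cap\Omega$. So suppose $x_{0}\in\overline{\de E\cap\Omega}$ is a vertex of $\Omega$, set $C:=T_{x_{0}}\Omega$ and $\omega:=C\cap\Sph^{2}$, and write $E_{r}:=r^{-1}(E-x_{0})$. Since $x_{0}$ is a vertex, $C$ contains no line; by an elementary fact about geodesically convex subsets of the sphere this is equivalent to $\omega$ being contained in an open hemisphere.

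First I would collect the standard ingredients. From the relative isoperimetric inequality in convex sets one obtains, for local almost-minimizers of $\pr$, the usual upper and lower density estimates for $r\mapsto\pr(E;B_{r}(x))$, uniform up to $\de\Omega$; in particular $\liminf_{r\to0}r^{-2}\pr(E;B_{r}(x_{0}))>0$, because $x_{0}\in\overline{\de^{*}E}$. Convexity of $\Omega$ makes the nearest-point projection onto $\overline{\Omega}$ $1$-Lipschitz, which yields a monotonicity formula for the rescaled relative perimeter at $x_{0}$: the map $r\mapsto r^{-2}\pr(E;B_{r}(x_{0}))$ is monotone up to an error governed by $\psi_{\Omega}(E;x_{0},\cdot)$, hence it admits a finite positive limit $\Theta$. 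By compactness of almost-minimizers with uniform density bounds, $E_{r_{j}}\to K$ in $L^{1}_{\loc}$ along some $r_{j}\to0$, where $K\subset C$ is a \emph{local minimizer of the relative perimeter in $C$} (the $\psi$-term scales away), $0\in\de K$, and, by the equality case of the monotonicity formula in the limit, $K$ is a cone with vertex $0$.

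Next I would classify the blow-up. In $\R^{3}$, area-minimizing hypersurfaces have no interior singularities, and minimizers of the relative perimeter against the convex obstacle $\de C$ have the expected free-boundary regularity; hence $\de^{*}K\cap C$ is a smooth embedded minimal surface away from $0$, meeting the regular part of $\de C$ orthogonally. Being a cone, its link $\sigma:=\de^{*}K\cap C\cap\Sph^{2}$ is a smooth embedded geodesic of $\Sph^{2}$ contained in $\omega$ and orthogonal to $\de\omega$ at its endpoints; therefore $\sigma$ is an arc of a great circle, so $\de^{*}K\cap C=\Pi\cap C$ for some plane $\Pi\ni0$, and $K=C\cap\{x\cdot w<0\}$ for a unit normal $w$ of $\Pi$, with $\Pi$ cutting $C$ into two pieces of positive measure and meeting $\de C$ orthogonally along $\Pi\cap\de C$. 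The hypothesis $n=3$ enters precisely here: it is what forces the minimal link to be a single great-circle arc, hence the blow-up to be a flat sector through the apex.

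The hard part will be the last step: deriving a contradiction from the existence of such a flat free-boundary minimal sector through the vertex of a line-free convex cone $C\subset\R^{3}$. This is the real content. The naive competitors — replacing $K$ inside a ball $B_{\rho}(0)$ by $C$ (fill) or by $\emptyset$ (empty) — only give $\mathcal{H}^{1}(\sigma)\le 2\min\{\mathcal{H}^{2}(\omega^{+}),\mathcal{H}^{2}(\omega^{-})\}$, where $\omega^{\pm}$ are the components of $\omega\setminus\sigma$, and this is compatible with some line-free cones (for instance very wide ones), so something sharper is required. The route I would pursue is to exploit the absence of lines in $C$ — equivalently, that the two supporting half-spaces of $C$ along the edge rays $R_{1},R_{2}$ of the sector $\Pi\cap C$ meet in a wedge strictly larger than $C$ whose edge is a genuine line — to build a competitor, supported in a small $B_{\rho}(0)$ (where $|F\difsim K|^{2/3}\psi_{\Omega}=o(\rho^{2})$), that strictly decreases the interface area of the flat sector at order $\rho^{2}$; a viable alternative is to invoke the classification of stable free-boundary minimal cones in convex cones of $\R^{3}$, along the lines of \cite{EdelenLi2022}. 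Excluding the flat-sector blow-up contradicts $x_{0}\in\overline{\de E\cap\Omega}$ and proves the theorem.
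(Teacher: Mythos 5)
Your outline reproduces the paper's architecture (blow-up at the vertex to a perimeter-minimizing cone in $T_{x_{0}}\Omega$, classification of that cone in $\R^{3}$ as a flat sector meeting $\de\Omega_{0}$ orthogonally, then a contradiction from the absence of lines), but it stops exactly where the paper's real work begins, and that is a genuine gap. Your last step is only a plan: you say you would ``build a competitor \dots that strictly decreases the interface area of the flat sector at order $\rho^{2}$'' or alternatively ``invoke the classification of stable free-boundary minimal cones in convex cones of $\R^{3}$''. No such classification is available off the shelf --- \cite{EdelenLi2022} is an $\e$-regularity theorem for locally almost-polyhedral domains, not a Bernstein-type classification in arbitrary line-free convex cones --- and the instability of the flat sector is precisely what the paper has to prove by hand. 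Its mechanism is not a small perturbation localized near the apex: the competitor slides the sector to a parallel plane $\pi_{\e}$ below height $1$ (gaining area $\tfrac{a^{2}}{b}\e^{2}$ after reducing, via a ``packing-box'' step, to a pyramid $C_{a,b}$ containing half of the line-free cone) and reconnects it to $\pi_{0}$ across a \emph{long} ruled transition region of height $h$, with profile $\phi_{h,\alpha}(t)=\frac{(1+h)^{\alpha}t^{-\alpha}-1}{(1+h)^{\alpha}-1}$; the decisive inequality $\int_{1}^{1+h}t\,\phi_{h,\alpha}'(t)^{2}\,dt<a^{2}$ only holds after choosing $\alpha<2a^{2}$ and $h$ large. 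In particular your heuristic that line-freeness lets one gain area by a compactly supported correction of order $\rho^{2}$ near the vertex is exactly the nontrivial claim, and as you yourself note, naive fill/empty competitors are inconclusive; without this construction (or a substitute) the proof is missing its core.

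Two further steps are asserted rather than proved and are not routine. First, your classification of the link assumes it is a single geodesic arc; a priori $\de E_{00}\cap\de B_{1}\cap\Omega_{0}$ could consist of several disjoint geodesic arcs, and the paper needs a separate spherical-geometry argument (angle sums in a geodesic quadrilateral, using orthogonality at the endpoints and that the cone is line-free) to rule this out. Second, ``the expected free-boundary regularity'' at $\de C$ is not standard here because $\de C$ need not be smooth away from the origin: the paper proves, via a Federer-type dimension reduction to a planar problem, both that the arc endpoints land at points admitting a unique supporting plane and that the meeting is orthogonal there. These can likely be repaired along the paper's lines, but as written they are gaps, and the final instability/competitor step is the essential missing idea.
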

	
Theorem \ref{thm:VS} provides a partial answer to the broader question whether the internal boundary of an almost-minimizer of the relative perimeter in $\Omega$ can reach singular points of $\de\Omega$. The restriction to dimension $n=3$ is not purely technical. Indeed, in the forthcoming paper \cite{LeoVia_conoR4}, we show that in dimension $n=4$ the intersection of the half-space 
\[
E = \{(x_{1},\dots,x_{4})\in \R^{4}:\ x_{1}<0\}
\] 
with the cone 
\[
C_{\lambda} = \{(x_{1},\dots,x_{4})\in \R^{4}:\ x_{4}>\lambda\sqrt{x_{1}^{2}+x_{2}^{2}+x_{3}^{2}}\}
\] 
is strictly stable with respect to a suitable class of compact variations up to the boundary of $C_{\lambda}$, as soon as $\lambda>0$ is small enough. 
Even though the global minimality of $E$ for the relative perimeter in $C_{\lambda}$ for small $\lambda$ (with respect to \emph{all} compact variations) remains an open question, the strict stability of $E$ in $C_{\lambda}$ supports the conjecture that Theorem \ref{thm:VS} is false in dimension $n\ge 4$.

In the more general context of capillarity, it is well-known that when $\Omega$ is of class $C^{1,1}$, a capillary surface interface in $\Omega$ meets $\de\Omega$ according to Young's law \cite{young1805iii}, i.e., forming a contact angle $\beta = \arccos \gamma$. Here $\gamma\in [-1,1]$ is the wetting coefficient appearing in the capillary energy (without bulk terms)
\[
\pr(E) + \gamma \p(E;\de \Omega)\,,
\]
so that one obtains the relative perimeter in the special case $\gamma=0$.
The rigorous derivation of Young's law is based on a boundary regularity result for local minimizers of the capillary energy, which in turn requires $\de\Omega$ to be sufficiently smooth \cite{DePhilippisMaggi2015}. However, the study of capillarity in non-smooth containers is not only as relevant in applications, as it is Young's law for smooth ones (see \cite{Fin86book}), but also one of the first historical testimonies of scientific research around this natural phenomenon. Indeed, one of the first studies on the behavior of capillary surfaces in wedges was conducted by Taylor about a century before Young's and Laplace's works in the context of smooth containers (see \cite{Fin86book}). The interest for the capillarity phenomenon in containers with wedge- or corner-type singularities emerges in various works, like in Concus-Finn \cite{ConcusFinn69}, Lancaster \cite{Lancaster2012}, Chen-Finn-Miersemann \cite{ChenFinnMiersemann2008}, Lancaster-Siegel \cite{LancasterSiegel1996}, Tamanini \cite{Tamanini_angoli}. We also mention that, in the same spirit, the first author in collaboration with G.~Saracco studied in \cite{LS18a} the boundary behavior of non-parametric solutions of the prescribed mean curvature equation on a weakly-regular domain $A$, with application to the capillarity for perfectly wetting fluids ($\gamma=-1$) that partially fill the cylinder $A\times \R$. 
	
We now consider the case of the relative perimeter in convex domains, and recall some relevant results in this context. In \cite{BokowskiSperner1979}, Bokowski and Sperner obtained the characterization of minimizers for the relative isoperimetric problem in Euclidean balls, as well as a relative isoperimetric inequality in a convex domain with a non-sharp constant explicitly written in terms of a Bonnesen-type asymmetry index. Later on, Lions and Pacella \cite{LionsPacella1990} showed the optimal isoperimetric inequality 
\[
\p(E; \C) \ge \p(B_{E}; \C)
\] 
in a convex cone $\C$ with a vertex at the origin, for all measurable $E\subset \C$ with $|E|<+\infty$, where $B_{E}$ denotes the Euclidean ball centered at the origin and such that $|B_{E}\cap \C|=|E|$. 
The same inequality has been later strengthened by Figalli and Indrei \cite{FigalliIndrei2013} who proved its sharp quantitative form using optimal transport. We also mention the relative isoperimetric problem in $\Omega$ such that $\R^{n}\setminus \Omega$ is convex. This problem was first tackled by Choe, Ghomi and Ritor\'e \cite{ChoeGhomiRitore2007}, who proved the optimal inequality
\[
\pr(E) \ge \frac{n\omega_{n}^{\frac{1}{n}}}{2} |E\cap \Omega|^{(n-1)/n}
\]
plus the characterization of the equality case, under the assumption $\de\Omega \in C^{2}$. More recently, this result has been extended by Fusco and Morini \cite{FuscoMorini2023} to the non-smooth case. Concerning the properties of minimizers, Sternberg and Zumbrun proved in \cite{SternbergZumbrun1999} that the internal boundary of any local minimizer of the relative perimeter in a convex domain with $C^{2,\alpha}$ boundary is either connected, or a union of parallel planes meeting $\de\Omega$ orthogonally. Such a property follows quite directly from the concavity of the isoperimetric profile, which is another key result obtained in the very same paper. See also \cite{Kuwert2003}, \cite{Milman2009}, \cite{RitoreVernadakis2015}, \cite{LRV18} for extensions to general convex sets. 

The study of critical points of the relative perimeter is closely connected with the classification of free-boundary minimal surfaces. The internal boundary of a local minimizer of the relative perimeter without volume constraints is indeed a free-boundary minimal surface. The study of free-boundary minimal surfaces, initiated by seminal works of Courant \cite{Courant1945} and Lewy \cite{Lewy1951}, has been carried out by various authors in the past (see e.g. Nitsche's book \cite{Nitsche-book} for an updated list of classical references, up to 1975) and still represents a very active research area 
%
(see \cite{Carlotto2019} for a comprehensive overview of more recent advances in the field). However, the majority of classification results is usually obtained on specific domains - or ambient manifolds - with smooth boundary, while the case of non-smooth domains is typically neglected due to the technical difficulties in implementing variation arguments in presence of boundary singularities. It is, however, worth mentioning the very recent, Allard-type $\e$-regularity result for free-boundary minimal surfaces, recently proved by Edelen and Li in \cite{EdelenLi2022} for locally-almost-polyhedral domains. In this regard, Theorem \ref{thm:VS} can be easily extended to free-boundary, locally (almost) area-minimizing $2$-currents in $3$-dimensional convex domains. Moreover, as observed by one of the anonymous referees of the paper, if we combine our result with \cite[Theorem 1.1]{EdelenLi2022}, we can prove that the singular set of any such current in a polyhedral domain is empty, which improves \cite[Theorem 1.2]{EdelenLi2022}.

\section{Preliminaries}
\subsection{Notation.}
For this work, unless otherwise specified, $\Omega$ denotes a open, convex subset of $\R^{n}$. Given $A \subset \R^{n}$ open and $F \subset \Omega$ measurable, we define the relative perimeter of $F$ in $A$ as
\[
\pr(F;A) := \p(F;A \cap \Omega) \,,
\]
where 
\[
\p(F;A) = \sup \left\{\int_{A\cap F} \div g(x)\, dx\,:\ g\in C^{1}_{c}(A;\R^{n}),\ |g|\le 1\right\}\,.
\]
It is convenient to introduce the minimality gap of $E$ in $A$ (relative to $\Omega$) as
\[
\Psi_{\Omega}(E;A) = \pr(E;A) - \inf\big\{\pr(F;A):\ F\difsim E \Subset A\cap \overline{\Omega}\big\}\,.
\]
We observe that, if $E$ is a local almost-minimizer of $\pr$ then, for all $x \in \overline{\Omega}$ and for all $0 < r < r_x$,
	\begin{equation}\label{eq:psialmost}
		\Psi_{\Omega}(E;B_{r}(x)) \leq \omega_{n}^{\frac{n-1}{n}} r^{n-1} \psi_{\Omega}(E;x,r) \, .
	\end{equation}

We denote the characteristic function of $F$ by $\ch_{F}$. We denote by $Df$ the distributional gradient of $f \in BV(A)$, and identify it as usual with a vector-valued Radon measure on $A$. Given a vector-valued Radon measure $\mu = (\mu_{1},...,\mu_{p}): \cB(\R^{n}) \rightarrow \R^{p}$, we denote by $|\mu|$ its total variation. Let $u = (u_{1},...,u_{p}): \R^{n} \rightarrow \R^{p}$, $u \in L^{1}_{\loc}(\mu)$, then we denote by $u \cdot \mu$ the Radon measure defined by
\begin{equation*}
    (u \cdot \mu) E := \int_{E} u \cdot \dd \mu = \sum_{q = 1}^{p} \int_{E} u_{q} \dd \mu_{q} \, .
\end{equation*}
Let $S \subset \R^{n}$ and $t > 0$, we define $S_{t} := t^{-1} S$. Consequently, the tangent cone to a convex set $\Omega$ at $x_{0} \in \overline{\Omega}$ is denoted by $T_{x_{0}} \Omega$ and defined as the (set-theoretic, loc-Hausdorff, $L^{1}_{loc}$) limit of $(\Omega - x_{0})_{t}$ as $t\to 0$. We shall often assume that $x_{0}=0$ and abbreviate $\Omega_{0} = T_{0}\Omega$. 

Finally, we say that a convex set $C \subset \R^{n}$ is a wedge provided there exist $x_0\in \R^{n}$ and two linearly independent unit-normal vectors $w_{1}, w_{2} \in \de B_{1}$ such that
    \begin{equation*}
        C = \{ x \in \R^{n} : \, (x-x_0) \cdot w_{1} \leq 0 , \, (x-x_0) \cdot w_{2} \leq 0 \} \, .
    \end{equation*}
We note that a wedge $C$ is a cone with respect to all points of the $n-2$ dimensional affine subspace 
\[
\sigma_{C} = \{x_{0} + v\,:\ \langle v,w_{i}\rangle = 0\ \forall\, i=1,2\}\,,
\]
that we call the spine of $C$.

\subsection{Boundary density estimates.} Here we establish perimeter and volume density estimates for almost-minimizers at a boundary point for $\Omega$, and we provide the full proof for the reader's convenience. We highlight that, in what follows, $\Omega$ is required to be just Lipschitz (i.e., the convexity of $\Omega$ is not needed).
\begin{lemma}
Let $\Omega\subset \R^{n}$ be an open set with Lipschitz boundary. Let $E$ be an almost-minimizer in $\Omega$. Up to a translation, we assume that $0 \in \de\Omega$ and that $\pr(E;B_{r}) > 0$ for all $r > 0$. Then, there exist a constant $C \ge 1$ and a radius $\overline{r} > 0$, both depending on $0$, such that
\begin{align}
	\label{eq:densityper}
	&C^{-1}r^{n-1} \le \pr(E;B_{r}) \le Cr^{n-1}\\
	\label{eq:densityvol}
	&\min\big(|E\cap B_{r}\cap \Omega|,\ |(B_{r}\cap \Omega)\setminus E|\big) \ge C^{-1}r^{n}\,,
\end{align}
for all $0 < r < \overline{r}$.
\end{lemma}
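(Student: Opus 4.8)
The plan is to prove the two inequalities by the standard comparison argument for almost-minimizers, adapted to the boundary point $0 \in \partial\Omega$, exploiting the Lipschitz regularity of $\partial\Omega$ only through a bi-Lipschitz flattening of the boundary near $0$.

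First I would establish the upper bound in \eqref{eq:densityper}. Fix a small $r$ and compare $E$ with the competitor $F = E \setminus B_r$ (equivalently $F = E \setminus \overline{B_\rho}$ for a.e.\ $\rho < r$, then let $\rho \to r$). Plugging $F$ into the almost-minimality inequality \eqref{eq:A-M}, and using that $\partial^* F \cap \Omega \cap B_\rho$ is contained in $\Omega \cap \partial B_\rho$, one gets
\[
\pr(E;B_\rho) \le \Hau^{n-1}(\Omega \cap \partial B_\rho) + |E \cap B_\rho|^{\frac{n-1}{n}} \psi_\Omega(E;0,r) \le C_n\rho^{n-1} + \omega_n^{\frac{n-1}{n}}\rho^{n-1}\psi_\Omega(E;0,r),
\]
where I use the isoperimetric-type bound $|E\cap B_\rho| \le |B_\rho| = \omega_n\rho^n$ and the fact that $\psi_\Omega(E;0,r)\to 0$; absorbing the $\psi$-term for $r$ below a suitable $\overline r$ gives $\pr(E;B_r)\le Cr^{n-1}$. (Here one must first check that $\rho\mapsto \pr(E;B_\rho)$ is finite and that $F\difsim E \Subset B_r$, which holds for a.e.\ $\rho$ since $|D\ch_E|(\partial B_\rho)=0$ for all but countably many $\rho$.)

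Next, the lower bound. Let $m(\rho) = |E\cap B_\rho\cap\Omega|$ and compare $E$ with $F = E\setminus B_\rho$ as above: almost-minimality plus the relative isoperimetric inequality in the half-space-like region $B_\rho\cap\Omega$ (valid for Lipschitz $\Omega$, after the bi-Lipschitz flattening, with a constant $c_0 = c_0(\Omega,0)$) yield, for a.e.\ $\rho < r$,
\[
c_0\, m(\rho)^{\frac{n-1}{n}} \le \pr(E;B_\rho) \le \Hau^{n-1}(\Omega\cap\partial B_\rho) + |D\ch_E|(\Omega\cap\partial B_\rho)\,\text{-type terms} \le 2\,m'(\rho) + \omega_n^{\frac{n-1}{n}}\rho^{n-1}\psi_\Omega(E;0,r);
\]
more precisely, the relative perimeter of the competitor in $B_\rho$ equals $\Hau^{n-1}(\partial^*E \cap \Omega \cap \partial B_\rho)$, which by the coarea formula is $m'(\rho)$ for a.e.\ $\rho$. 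Assuming $m(\rho)>0$ for all $\rho>0$ (which follows from $\pr(E;B_r)>0$ together with the relative isoperimetric inequality — otherwise $E$ or its complement would be trivial near $0$), one obtains a differential inequality $m'(\rho) \ge c\, m(\rho)^{\frac{n-1}{n}}$ once the $\psi$-term is absorbed using the already-proven upper bound and the smallness of $\psi_\Omega$; integrating from $0$ to $r$ gives $m(r)\ge C^{-1}r^n$. Running the same argument with $F = E\cup B_\rho$ in place of $E\setminus B_\rho$ controls $|(B_\rho\cap\Omega)\setminus E|$ and yields \eqref{eq:densityvol}. Finally, \eqref{eq:densityvol} combined with the relative isoperimetric inequality applied to $E\cap B_r$ inside $B_r\cap\Omega$ gives the lower bound $\pr(E;B_r)\ge C^{-1}r^{n-1}$ in \eqref{eq:densityper}, completing the proof.

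The main obstacle is handling the $\psi$-term and the relative isoperimetric constant uniformly in $\rho$: the relative isoperimetric inequality in $B_\rho \cap \Omega$ must have a constant independent of $\rho$ (scaling-invariant), which requires care at a Lipschitz boundary point — one reduces to balls centered at $0$ so that the blow-downs $(\Omega-0)_t$ converge to a fixed Lipschitz cone, on which a uniform inequality holds, and a covering/compactness argument transfers this to small $\rho$. A second delicate point is the subtle circularity between the upper and lower bounds: the absorption of $\psi_\Omega(E;0,r)\,\rho^{n-1}$ into $m'(\rho)$ in the differential inequality needs $\pr(E;B_\rho)\gtrsim \rho^{n-1}$, or alternatively a bootstrap where one first proves $m(\rho)\ge C^{-1}\rho^{n+\delta}$ for a slightly worse exponent and then improves it; I would organize the argument so that the upper bound \eqref{eq:densityper} is proven first and used to control the error term, after which the lower bounds follow cleanly.
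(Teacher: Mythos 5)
Your outline follows the same comparison scheme as the paper (add/remove $B_t\cap\Omega$, a scaling-invariant relative isoperimetric inequality at the Lipschitz boundary point, a differential inequality for the volume, and the emptying competitor for the upper perimeter bound), but there is a genuine gap exactly at the point you flag as ``delicate'', and neither of your proposed fixes closes it. In the volume estimate you bound the almost-minimality error by $|F\difsim E|^{\frac{n-1}{n}}\psi\le \omega_n^{\frac{n-1}{n}}\rho^{n-1}\psi$, which throws away the structure that makes the argument work: with that crude bound the inequality $c_0\,m(\rho)^{\frac{n-1}{n}}\le m'(\rho)+C\rho^{n-1}\psi$ is vacuous precisely when $m(\rho)\ll\rho^{n}$, which is the scenario to be excluded. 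The remedies you suggest do not work: the already-proven \emph{upper} bound $\pr(E;B_\rho)\le C\rho^{n-1}$ gives no way to absorb an additive error of size $\rho^{n-1}\psi$ against $m(\rho)^{\frac{n-1}{n}}$ (what would be needed is the \emph{lower} perimeter bound, i.e.\ part of the statement being proved), and the bootstrap ``first prove $m(\rho)\ge C^{-1}\rho^{n+\delta}$'' has no starting point, since a priori $m(\rho)$ could decay faster than any power of $\rho$.

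The resolution used in the paper is simpler: keep the error in its natural form. Choosing $F_t=E\cup(B_t\cap\Omega)$ when $m(t)>\mu(t)$ and $F_t=E\setminus(B_t\cap\Omega)$ otherwise, one has $|F_t\difsim E|=\min\{m(t),\mu(t)\}=:s(t)$, so \eqref{eq:A-M} together with the uniform relative isoperimetric inequality gives, for a.e.\ $t<r$,
\[
\overline{C}\,s(t)^{\frac{n-1}{n}}\ \le\ \pr(E;B_t)\ \le\ s'(t)+s(t)^{\frac{n-1}{n}}\,\psi_{\Omega}(E;0,r)\,,
\]
where at each $t$ the competitor is matched to the smaller of $m(t),\mu(t)$. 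Since $\psi_{\Omega}(E;0,r)\to0$, the error is absorbed into the left-hand side as soon as $\psi_{\Omega}(E;0,r)\le\overline{C}/2$, yielding $s'(t)\ge\frac{\overline{C}}{2}\,s(t)^{\frac{n-1}{n}}$ with no circularity and no perimeter bound needed; integration gives \eqref{eq:densityvol}, and then both inequalities in \eqref{eq:densityper} follow essentially as you indicate. A minor further point: for a merely Lipschitz $\partial\Omega$ the blow-downs of $\Omega$ at $0$ need not converge to ``a fixed Lipschitz cone''; the uniform isoperimetric constant comes from the precompactness of the rescaled domains $r^{-1}(\Omega\cap B_r)$ in the class of connected Lipschitz domains (or from your bi-Lipschitz flattening, which also works), not from an actual tangent-cone limit.
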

\begin{Proof}
	We start proving \eqref{eq:densityvol}. Given $0<r<r_{0}$ we set
	\[
	m(r) := |B_{r} \cap \Omega\cap E| \, , \quad \mu(r) := |B_{r} \cap \Omega \setminus E| \,.
	\]
	Both $m$ and $\mu$ are non-decreasing, thus differentiable for almost all $r > 0$. By \cite[Example 13.3]{maggi2012sets}, for almost all $r > 0$, we have
	\[
	m'(r) = \Hau^{n-1}(E \cap \de B_{r} \cap \Omega) \, , \quad \mu'(r) = \Hau^{n-1}(\de B_{r} \cap \Omega \setminus E) \, .
	\]
Since the one-parameter family of rescaled domains $D_{r} := r^{-1}(\Omega\cap B_{r})$, $0<r\le 1$ is precompact in the class of Lipschitz and connected domains with respect to the $L^{1}$-convergence, there exists a constant $\overline{C}>0$ such that the following, relative isoperimetric inequality holds:
\begin{equation} \label{eq:relisop}
	\pr(E;B_{r}) \geq \overline{C} \min \{ m(r), \mu(r) \}^{\frac{n-1}{n}} \,,
\end{equation}
for all $0<r<1$. Set $0 < t < r$ and define the competitor 
\[
F_{t} =
\begin{cases}
E \cup B_{t} \cap \Omega & \text{if } m(t) > \mu(t),\\ 
E \setminus B_{t} \cap \Omega &  \text{otherwise.} 
\end{cases}
\]
We note that in the first case $F_{t} \difsim E = B_{t} \cap \Omega \setminus E$, while in the second case $F_{t} \difsim E = B_{t} \cap \Omega \cap E$. In any case, we have $F_{t} \difsim E \subset \subset B_{r} \cap \Omega$. Thus, by the almost-minimality of $E$ in $\Omega$, and for almost all $0<t<r$, we infer that either
	\begin{align} \label{2a}
		\pr(E;B_{r}) & \leq \pr(F_{t};B_{r}) + \mu(t)^{\frac{n-1}{n}} \psi(r) \\ 
		\nonumber & \leq \pr(E;B_{r} \setminus \overline{B_{t}}) +  \Hau^{n-1}(\de B_{t} \cap \Omega \setminus E) + \mu(r)^{\frac{n-1}{n}} \psi(r) \, ,
	\end{align}
	or
	\begin{align} \label{2b}
		\pr(E;B_{r}) & \leq \pr(F_{t};B_{r}) + m(t)^{\frac{n-1}{n}} \psi(r) \\ 
		\nonumber & \leq \pr(E;B_{r} \setminus \overline{B_{t}}) + \Hau^{n-1}(\de B_{t} \cap \Omega \cap E) + m(r)^{\frac{n-1}{n}} \psi(r) \, .
	\end{align}
	where $\psi(r) := \psi_{\Omega}(E;0,r)$. Taking the limit as $t \nearrow r$ in \eqref{2a} and \eqref{2b}, and using \eqref{eq:relisop}, we deduce that, if $m(r) > \mu(r)$, then for almost all $0<r<r_{0}$ we have
	\begin{equation*}
		\mu'(r) + \mu(r)^{\frac{n-1}{n}} \psi(r) \geq \overline{C} \mu(r)^{\frac{n-1}{n}} \, ,
	\end{equation*}
	while otherwise we have
	\begin{equation*}
	  m'(r) + m(r)^{\frac{n-1}{n}} \psi(r) \geq \overline{C} 
        m(r)^{\frac{n-1}{n}}\,.
	\end{equation*}
	Therefore, calling $s(r) := \min \{ m(r), \mu(r) \}$ and owing to the infinitesimality of $\psi(r)$ as $r\to 0$, we obtain
	\begin{equation*}
		\dfrac{s'(r)}{s(r)^{\frac{n-1}{n}}} \geq C \, ,
	\end{equation*}
	for $0 < r < \overline{r}$, for some $C,\overline{r} > 0$. Integrating this inequality on the interval $(0,r)$ we obtain \eqref{eq:densityvol}. Then, the first inequality in \eqref{eq:densityper} follows from \eqref{eq:densityvol} and \eqref{eq:relisop}. Finally, the second inequality in \eqref{eq:densityper} follows from the observation that, taking the limit as $t \nearrow r$ in \eqref{2a} and possibly redefining $\overline{r}$ and $C$, we have
	\begin{align*}
		\pr(E;B_{r}) & \leq \Hau(\de B_{r} \cap \Omega \setminus E) + \mu(r)^{\frac{n-1}{n}} \psi(r) \leq C r^{n-1} \, ,
	\end{align*}
	for every $0 < r < \overline{r}$.
\end{Proof}
	
\subsection{Blow-up limits.} We now show that a sequence of dilations of an almost-minimizer $E$ in $\Omega$ converge, up to subsequences, to a minimizer of the relative perimeter in the tangent cone $\Omega_{0}$. 
\begin{lemma} \label{BlowUpConv}
	Let $E$ be a almost-minimizer in $\Omega$, and assume $0 \in \de \Omega$ and $\pr(E;B_{r}) > 0$ for all $r > 0$. Then, there exist a sequence $t_{j} \searrow 0$ and a measurable set $E_{0} \subset \Omega_{0}$ such that $E_{t_{j}} \rightarrow E_{0}$ in $L^{1}_{loc}$ and $E_{0}$ is a perimeter-minimizer in $\Omega_{0}$, namely 
\[
\Psi_{\Omega_{0}}(E_{0};B_{R}) = 0 \qquad \text{for any $R > 0$.}
\]
\end{lemma}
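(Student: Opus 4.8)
The proof of Lemma \ref{BlowUpConv} is a standard blow-up/compactness argument, so I will organize it in three parts: (i) uniform perimeter bounds and extraction of a subsequential limit; (ii) control of the minimality gap along the rescalings; (iii) passage to the limit to conclude that the blow-up limit $E_0$ is a minimizer in $\Omega_0$.

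First I would fix any radius $R>0$. By the density estimate \eqref{eq:densityper} from the previous lemma, for all sufficiently small $t$ one has $\pr(E_t;B_R) = t^{-(n-1)}\pr(E;B_{tR}) \le C R^{n-1}$, so the relative perimeters of $E_t$ in $B_R$ are uniformly bounded. A diagonal argument over $R = 1,2,3,\dots$ together with the $BV$-compactness theorem yields a sequence $t_j\searrow 0$ and a measurable set $E_0$ with $\ch_{E_{t_j}}\to \ch_{E_0}$ in $L^1_{\loc}(\R^n)$; moreover the rescaled domains $\Omega_{t_j}=(\Omega-0)_{t_j}$ converge to $\Omega_0 = T_0\Omega$ in $L^1_{\loc}$ (and locally in Hausdorff distance), from which one checks $E_0\subset \Omega_0$ up to null sets. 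It is also convenient to pass the density estimates \eqref{eq:densityper}–\eqref{eq:densityvol} to the limit, so that $\pr[\Omega_0]$ is a nontrivial measure; this will be needed only to ensure the statement is not vacuous and is otherwise not essential to the minimality conclusion.

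Next I would track how almost-minimality rescales. If $E$ satisfies \eqref{eq:A-M} near $0$, then $E_t$ satisfies the same inequality in $\Omega_t$ with $\psi$ replaced by $\psi_\Omega(E;0,t\,\cdot\,)$; equivalently, in terms of the minimality gap, $\Psi_{\Omega_t}(E_t;B_R) = t^{-(n-1)}\Psi_\Omega(E;B_{tR}) \le \omega_n^{(n-1)/n} R^{n-1}\,\psi_\Omega(E;0,tR) \to 0$ as $t\to 0$, using \eqref{eq:psialmost}. So along $t_j$ the minimality gap of $E_{t_j}$ in $B_R$ tends to $0$.

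Finally, to conclude $\Psi_{\Omega_0}(E_0;B_R)=0$, I would take an arbitrary competitor $G\subset \Omega_0$ with $G\difsim E_0\Subset B_R\cap\overline{\Omega_0}$ and show $\pr[\Omega_0](E_0;B_R)\le \pr[\Omega_0](G;B_R)$. The idea is to manufacture, for each $j$, an admissible competitor $G_j\subset \Omega_{t_j}$ for $E_{t_j}$ with $G_j\difsim E_{t_j}\Subset B_R\cap\overline{\Omega_{t_j}}$, $G_j\to G$ in $L^1_\loc$, and $\limsup_j \pr[\Omega_{t_j}](G_j;B_R)\le \pr[\Omega_0](G;B_R)$; then almost-minimality of $E_{t_j}$ gives $\pr[\Omega_{t_j}](E_{t_j};B_R)\le \pr[\Omega_{t_j}](G_j;B_R) + |G_j\difsim E_{t_j}|^{(n-1)/n}\psi_\Omega(E;0,t_jR)$, and passing to the limit using lower semicontinuity of the relative perimeter on the left and the upper bound on the right yields the desired inequality. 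The main obstacle — and the only nontrivial point — is the construction of the recovery competitors $G_j$: one must reconcile the fixed competitor $G$, defined relative to the limit cone $\Omega_0$, with the varying domains $\Omega_{t_j}$, since $G$ need not be contained in $\Omega_{t_j}$ and its difference set need not be compactly contained in $\Omega_{t_j}$. Because $\Omega$ is convex, $\Omega_{t}\subset \Omega_0$ for all $t>0$ (the tangent cone contains all the rescalings), so one may simply take $G_j := G\cap \Omega_{t_j}$; then $G_j\difsim E_{t_j}\subset (G\difsim E_0)\cup (E_0\difsim E_{t_j})\cup(\Omega_0\setminus\Omega_{t_j})$, and one checks $G_j\to G$ in $L^1_\loc$ and that the extra boundary created on $\de\Omega_{t_j}$ does not contribute (it lies outside $\Omega_{t_j}$, hence is invisible to $\pr[\Omega_{t_j}]$), while on the interior the cut is along $\de\Omega_{t_j}$ where $G$ already agrees with the ambient set. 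A short argument with a slightly enlarged ball $B_{R'}$, $R'>R$, and Fatou/coarea to choose good slicing radii handles the compact-containment requirement and the possible perimeter contribution on $\de B_{R'}$. With $G_j$ in hand the limit passage is routine, and the proof is complete.
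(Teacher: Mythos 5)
Your overall skeleton is the same as the paper's: uniform perimeter bounds plus a diagonal extraction, the scaling identity $\Psi_{\Omega_{t}}(E_{t};B_{R})=t^{1-n}\Psi_{\Omega}(E;B_{tR})\le \omega_n^{(n-1)/n}R^{n-1}\psi_\Omega(E;0,tR)\to 0$ from \eqref{eq:psialmost}, and a recovery-competitor argument giving a liminf inequality for the minimality gap. One minor point: for $L^1_{\loc}$ compactness you need a bound on the \emph{full} perimeter $\p(E_t;B_R)$, not just on the relative perimeter $\p(E_t;B_R\cap\Omega_t)$; the paper obtains it by adding $\p(\Omega_t;B_R)\le CR^{n-1}$, which is where the Lipschitz regularity of $\de\Omega$ enters. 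This is a one-line fix, not the real issue.

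The genuine gap is in the recovery competitor. The set $G_j:=G\cap\Omega_{t_j}$ is not admissible: you need $G_j\difsim E_{t_j}\Subset B_R\cap\overline{\Omega_{t_j}}$, but away from the compactly contained set where $G$ differs from $E_0$ you have $G_j=E_0\cap\Omega_{t_j}$, which differs from $E_{t_j}$ on a set over which you have no spatial control for fixed $j$ (the $L^1_{\loc}$ convergence localizes nothing), so $G_j\difsim E_{t_j}$ is in general not compactly contained in $B_R$. Your convexity observation $\Omega_t\subset\Omega_0$ is correct, but it only removes the easier obstruction (containment in the varying domain); the real obstruction is that an admissible competitor must coincide with $E_{t_j}$, not with $E_0$, outside a smaller ball. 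The necessary fix is exactly the core of the paper's proof: glue, i.e.\ take $F_j=[(F_0\cap B_\rho)\cup(E_{t_j}\cap(B_R\setminus B_\rho))]\cap\Omega_{t_j}$ for a well-chosen radius $\rho$, where (i) $\rho$ is chosen (via Fubini and the $L^1_{\loc}$ convergence, as in \eqref{CondRho} and \eqref{eq:equaltraces}) so that $\int_{\de B_\rho\cap\Omega_{t_j}}|\ch_{E_{t_j}}-\ch_{E_0}|\,\dd\Hau^{n-1}\to 0$ and neither $E_{t_j}$ nor $E_0$ charges the sphere, so the cut-and-paste formula holds with only that interface error, and (ii) one inserts an inner approximation $\p(E_0;B_\rho\cap\Omega_{t_{j_0}})\ge \p(E_0;B_R\cap\Omega_0)-\e$ as in \eqref{RadiusChoice} before invoking lower semicontinuity on a \emph{fixed} open set. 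Your final sentence about ``good slicing radii via Fatou/coarea'' gestures at this mechanism but does not carry it out; moreover, once you do glue, your scheme ``$\limsup_j \p(G_j;B_R\cap\Omega_{t_j})\le \p(G;B_R\cap\Omega_0)$ plus lower semicontinuity on the left'' no longer closes as stated, because the glued competitor carries the annulus term $\p(E_{t_j};(B_R\setminus\overline{B_\rho})\cap\Omega_{t_j})$, which need not converge to anything useful; one must cancel this common contribution by estimating the gap $\Psi_{\Omega_{t_j}}(E_{t_j};B_R)$ directly, as in \eqref{EstMinGap}. In short, the step you compress into one sentence is precisely the nontrivial part of the lemma, and the competitor you actually write down would fail the admissibility constraint.
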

	
\begin{proof}
First, we fix $R>0$ and prove that there exist $t_{0},C > 0$ such that
	\begin{equation} \label{eq:PerBound}
		\p(E_{t};B_{R}) \leq C R^{n-1}\qquad \forall\, 0<t<t_{0}\,.
	\end{equation}
In what follows, for more simplicity, we will write $C$ to denote a constant that might change from one line to another. To prove \eqref{eq:PerBound}, we note that  
\begin{equation}\label{eq:blowub}
\p(\Omega_{t};B_{R}) = t^{1-n}\p(\Omega;B_{Rt}) \le C R^{n-1}
\end{equation}
since $\de\Omega$ is Lipschitz. 
Then owing to \eqref{eq:densityper} and \eqref{eq:blowub}, and assuming $t< t_{0}:= \min(1,\bar{r}/R)$, we obtain
\begin{align*}
\p(E_{t};B_{R}) \leq \prt(E_{t};B_{R}) + \p(\Omega_{t};B_{R}) = t^{1-n} \big(\pr(E;B_{tR}) +  \p(\Omega;B_{Rt})\big) \le  C\, R^{n-1}\,,
\end{align*}
which proves \eqref{eq:PerBound}. Consequently, we obtain the global perimeter bound 
\[
\p(E_{t}) \le \p(E_{t};B_{R}) + \p(\Omega_{t};B_{R}) \le CR^{n-1}\,,
\] 
hence any blow-up sequence $E_{t_{j}}$ admits a (not relabeled) subsequence converging in $L^{1}(B_{R})$ to a limit set $E_{0}$. By a standard diagonal argument, one can prove the existence of a subsequence and a limit set, still denoted respectively as $E_{t_{j}}$ and $E_{0}$, such that $E_{t_{j}}\rightarrow E_{0}$ in $L^{1}_{loc}(\R^{n})$. Since the corresponding sequence of rescaled domains $\Omega_{t_{j}}$ converges to the tangent cone $\Omega_{0}$ in $L^{1}_{loc}(\R^{n})$, we infer that $E_{0}\subset \Omega_{0}$.  

Now we have to show that $\Psi_{\Omega_{0}}(E_{0};B_{R}) = 0$, for all $R > 0$. To do so, we claim that
\begin{equation} \label{LSCMinGap}
\Psi_{\Omega_{0}}(E_{0}; B_{R}) \leq \liminf_{j \rightarrow \infty} \Psi_{\Omega_{t_{j}}}(E_{t_{j}};B_{R}) \, .
\end{equation}
Indeed, let $F_{0} \subset \Omega_{0}$ be such that
\begin{equation*}
F_{0} \difsim E_{0} \subset \subset B_{R} \cap \overline{\Omega_{0}} \, .
\end{equation*}
By well-known properties of the trace of a $BV$ function, for all $j$, for a.e. $0 < \rho < R$ and $\Hau^{n-1}$-a.e. $x\in \de B_{\rho}$, we have 
\begin{equation}\label{eq:equaltraces}
\ch_{E_{t_{j}}}(x) = \Tr^{\pm}(E_{t_{j}},\de B_{\rho})(x)\quad \text{and} \quad  \ch_{E_{0}}(x) = \Tr^{\pm}(E_{0},\de B_{\rho})(x)\,,
\end{equation}
where $\Tr^{\pm}(E,\de B_{\rho})$ denotes the inner ($+$) or outer ($-$) trace of $\ch_{E}$ on $\de B_{\rho}$. By the $L^{1}_{loc}$-convergence of $E_{t_{j}}$ to $E_{0}$, we can choose $\rho<R$ with the above property, and such that 
\begin{equation} \label{CondRho}
		F_{0} \difsim E_{0} \subset \subset B_{\rho} \cap \overline{\Omega_{0}}\qquad\text{and}\qquad
		\lim_{j\to\infty}\int_{\de B_{\rho} \cap \Omega_{0}} |\ch_{E_{t_{j}}} - \ch_{E_{0}}| \dd \Hau^{n-1} = 0 \, .
\end{equation}
%
For any $j$ we define
	\begin{equation*}
		F_{j} := [(F_{0} \cap B_{\rho}) \cup \big(E_{t_{j}} \cap (B_{R} \setminus B_{\rho})\big)] \cap \Omega_{t_{j}} \, . 
	\end{equation*}
By construction, and thanks to \eqref{CondRho}, we have
\begin{equation*}
F_{j} \difsim E_{t_{j}} \subset \subset B_{R} \cap \overline{\Omega_{t_{j}}}
\end{equation*}
and
\begin{equation*}
\p(F_{j};B_{R} \cap \Omega_{t_{j}}) = \p(F_{0};B_{\rho} \cap \Omega_{t_{j}}) + \p(E_{t_{j}};(B_{R} \setminus \overline{B_{\rho}}) \cap \Omega_{t_{j}}) + \int_{\de B_{\rho} \cap \Omega_{t_{j}}} |\ch_{E_{t_{j}}} - \ch_{E_{0}}| \dd \Hau^{n-1} \, ,
\end{equation*}
hence owing to  \eqref{eq:equaltraces} we infer $\p(E_{t_{j}};\de B_{\rho}\cap \Omega_{t_{j}}) = 0$, and obtain
\begin{align} \label{EstMinGap}
		\Psi_{\Omega_{t_{j}}}(E_{t_{j}};B_{R}) & \geq \p(E_{t_{j}};B_{R} \cap \Omega_{t_{j}}) - \p(F_{j};B_{R} \cap \Omega_{t_{j}}) \nonumber \\
		& = \p(E_{t_{j}};B_{\rho} \cap \Omega_{t_{j}}) - \p(F_{0};B_{\rho} \cap \Omega_{t_{j}}) - \int_{\de B_{\rho} \cap \Omega_{t_{j}}} |\ch_{E_{t_{j}}} - \ch_{E_{0}}| \dd \Hau^{n-1} \nonumber \\
		& \geq \p(E_{t_{j}};B_{\rho} \cap \Omega_{t_{j}}) - \p(F_{0};B_{R} \cap \Omega_{0}) - \int_{\de B_{\rho} \cap \Omega_{t_{j}}} |\ch_{E_{t_{j}}} - \ch_{E_{0}}| \dd \Hau^{n-1} \, . 
	\end{align}
	Given $\e > 0$, we can select $j_{0}$ and possibly update the choice of $0<\rho <R$ in such a way that all the previous requirements are still satisfied, and moreover we have
	\begin{equation} \label{RadiusChoice}
		\p(E_{0};B_{\rho} \cap \Omega_{t_{j_{0}}}) \ge \p(E_{0};B_{R} \cap \Omega_{0}) - \e \, .
	\end{equation}
Now by \eqref{EstMinGap}, we infer that for $j \geq j_{0}$
	\begin{equation} \label{EstMinGap2}
		\Psi_{\Omega_{t_{j}}}(E_{t_{j}};B_{R}) \geq \p(E_{t_{j}};B_{\rho} \cap \Omega_{t_{j_{0}}}) - \p(F_{0};B_{R} \cap \Omega_{0}) - \int_{\de B_{\rho} \cap \Omega_{t_{j}}} |\ch_{E_{t_{j}}} - \ch_{E_{0}}| \dd \Hau^{n-1}
	\end{equation}
	Taking the $\liminf$ in \eqref{EstMinGap2}, and exploiting \eqref{CondRho}, \eqref{RadiusChoice} together with the lower-semicontinuity of the perimeter, we deduce that
	\begin{align*}
		\liminf_{j \rightarrow \infty} \Psi_{\Omega_{t_{j}}}(E_{t_{j}};B_{R}) & \geq \p(E_{0};B_{\rho} \cap \Omega_{t_{j_0}}) - \p(F_{0};B_{R} \cap \Omega_{0}) \\
		& \geq \p(E_{0};B_{R} \cap \Omega_{0}) - \p(F_{0};B_{R} \cap \Omega_{0}) - \e \, ,
	\end{align*}
which proves \eqref{LSCMinGap} by the arbitrary choice of $\e$ and $F_{0}$. 
Finally, by \eqref{eq:psialmost} we obtain
\begin{equation*}
		\Psi_{\Omega_{t_{j}}}(E_{t_{j}};B_{R}) = \dfrac{1}{t_{j}^{n-1}} \Psi_{\Omega}(E;B_{t_{j} R}) \leq \omega_{n}^{1 - \frac{1}{n}} \, R^{n-1} \psi_{\Omega}(E; 0, t_{j} R) \longrightarrow 0 \,,
\end{equation*}
which concludes the proof.
\end{proof}

\subsection{Boundary monotonicity formula in Lipschitz cones.}
We shall need the following conical deviation estimate for a set $G$ minimizing the relative perimeter in a Lipschitz cone $\Omega_0$.
\begin{theorem} \label{thm:monot}
	Let $G$ be a finite perimeter set in $\Omega_{0}$, and let $\cB_{r} := \Omega_{0} \cap B_{r}$. Then 
    \begin{align} \label{eq:MonotSet}
		& \left\{ \left| \dfrac{x}{|x|^{n}} \cdot D \ch_{G} \right| (\cB_{r} \setminus \cB_{\rho})  \right\}^{2}\ \leq\ 2 \,  |x|^{1-n} |D \ch_{G}|(\cB_{r} \setminus \cB_{\rho}) \\ 
		& \qquad \left\{ r^{1-n} \, |D \ch_{G}|(\cB_{r}) - \rho^{1-n} \, |D \ch_{G}|(\cB_{\rho}) + (n-1) \, \int_{\rho}^{r} t^{-n} \, \Psi_{\Omega_{0}}(G;B_{t}) \dd t \right\} \, , \nonumber
	\end{align}
    for almost every $0 < \rho < r$. 
\end{theorem}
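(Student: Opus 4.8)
The plan is to run De~Giorgi's cone--comparison argument inside the cone $\Omega_{0}$, carefully tracking the minimality gap at each scale and the radial deviation of $\de^{*}G$. Write $M:=\de^{*}G\cap\Omega_{0}$, let $\nu$ be the measure--theoretic outer unit normal of $G$ (so that $D\ch_{G}\restrict\Omega_{0}=\nu\,\Hau^{n-1}\restrict M$), and set $u(t):=\prz(G;B_{t})=|D\ch_{G}|(\cB_{t})=\Hau^{n-1}(M\cap B_{t})$ and $f(t):=t^{1-n}u(t)$. The heart of the matter is the one--scale estimate: for a.e.\ $t>0$,
\[
u(t)\ \le\ \frac{t}{n-1}\,\Hau^{n-2}\big(M\cap\de B_{t}\big)\ +\ \Psi_{\Omega_{0}}(G;B_{t})\,.
\]
To prove it I would, for a.e.\ $\sigma<t$, use as competitor the set $\widehat{G}_{\sigma}$ obtained by replacing $G\cap B_{\sigma}$ with the cone (vertex $0$) subtended by the slice $G\cap\de B_{\sigma}$ and leaving $G$ unchanged outside $B_{\sigma}$. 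Since $\Omega_{0}$ is a cone with vertex $0$, we have $\widehat{G}_{\sigma}\subset\Omega_{0}$ and $\widehat{G}_{\sigma}\difsim G\subset\overline{B_{\sigma}}\Subset B_{t}\cap\overline{\Omega_{0}}$; the inner and outer traces of $G$ on $\de B_{\sigma}$ coincide for a.e.\ $\sigma$ (cf.\ \eqref{eq:equaltraces}), so $\prz(\widehat{G}_{\sigma};B_{t})=\prz(\widehat{G}_{\sigma};B_{\sigma})+\prz(G;B_{t}\setminus\overline{B_{\sigma}})$, and the elementary area formula for cones over sets of finite perimeter in $\de B_{\sigma}$ gives $\prz(\widehat{G}_{\sigma};B_{\sigma})=\frac{\sigma}{n-1}\Hau^{n-2}(M\cap\de B_{\sigma})$. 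Inserting this into $\prz(G;B_{t})\le\prz(\widehat{G}_{\sigma};B_{t})+\Psi_{\Omega_{0}}(G;B_{t})$, together with $\prz(G;B_{t})=\prz(G;B_{\sigma})+\prz(G;B_{t}\setminus\overline{B_{\sigma}})$ for a.e.\ $\sigma$, yields the estimate above with $\Psi_{\Omega_{0}}(G;B_{t})$ on the right, for a.e.\ $\sigma<t$. Finally, $t\mapsto\Psi_{\Omega_{0}}(G;B_{t})$ is monotone non--decreasing (larger balls admit more competitors), hence continuous at a.e.\ point, and letting $t\downarrow\sigma$ concludes.

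Next I would turn this into a monotonicity inequality for $f$. Put $\theta(x):=\bigl|\nabla^{M}|x|\bigr|=\sqrt{1-(x\cdot\nu)^{2}/|x|^{2}}\in[0,1]$. By the coarea formula on the rectifiable set $M$ applied to $x\mapsto|x|$, one has $u'_{ac}(t)=\int_{M\cap\de B_{t}}\theta^{-1}\,d\Hau^{n-2}$ for a.e.\ $t$, while the singular part $D^{s}u$ of $Du$ is nonnegative and (by rectifiability) supported on an at most countable set of radii $t_{k}$ associated with the portion $M_{0}:=\{\theta=0\}$ of $M$ on which $\nu$ is radial, with $D^{s}u=\sum_{k}\Hau^{n-1}(M_{0}\cap\de B_{t_{k}})\,\delta_{t_{k}}$. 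Combining with the one--scale estimate and with $\Hau^{n-2}(M\cap\de B_{t})\le\int_{M\cap\de B_{t}}\theta^{-1}\,d\Hau^{n-2}=u'_{ac}(t)$, I obtain, for a.e.\ $t$,
\[
f'_{ac}(t)=(1-n)t^{-n}u(t)+t^{1-n}u'_{ac}(t)\ \ge\ t^{1-n}\!\!\int_{M\cap\de B_{t}}\!\!\Big(\tfrac{1}{\theta}-1\Big)\,d\Hau^{n-2}\ -\ (n-1)\,t^{-n}\,\Psi_{\Omega_{0}}(G;B_{t})\,.
\]

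Integrating over $(\rho,r)$ for a.e.\ $0<\rho<r$, adding back the nonnegative singular contribution $\int_{(\rho,r)}t^{1-n}\,dD^{s}u=\int_{M_{0}\cap(\cB_{r}\setminus\cB_{\rho})}|x|^{1-n}\,d\Hau^{n-1}$, and undoing the coarea on the first right--hand term (which converts $t^{1-n}\int_{M\cap\de B_{t}}(\theta^{-1}-1)\,d\Hau^{n-2}$ into $|x|^{1-n}(1-\theta)$ integrated over $M\cap(\cB_{r}\setminus\cB_{\rho})$), I arrive at
\[
f(r)-f(\rho)+(n-1)\!\int_{\rho}^{r}\! t^{-n}\,\Psi_{\Omega_{0}}(G;B_{t})\,dt\ \ge\ \tfrac12\int_{M\cap(\cB_{r}\setminus\cB_{\rho})}\frac{(x\cdot\nu)^{2}}{|x|^{n+1}}\,d\Hau^{n-1}\,,
\]
where the last step uses $1-\theta\ge\tfrac12(1-\theta^{2})=\tfrac12(x\cdot\nu)^{2}/|x|^{2}$ on $M\setminus M_{0}$ and $(x\cdot\nu)^{2}=|x|^{2}$ on $M_{0}$. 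The conclusion then follows from the Cauchy--Schwarz inequality on $M\cap(\cB_{r}\setminus\cB_{\rho})$,
\[
\left(\int_{M\cap(\cB_{r}\setminus\cB_{\rho})}\frac{|x\cdot\nu|}{|x|^{n}}\,d\Hau^{n-1}\right)^{2}\le\left(\int_{M\cap(\cB_{r}\setminus\cB_{\rho})}\frac{(x\cdot\nu)^{2}}{|x|^{n+1}}\,d\Hau^{n-1}\right)\left(\int_{M\cap(\cB_{r}\setminus\cB_{\rho})}|x|^{1-n}\,d\Hau^{n-1}\right),
\]
together with the identifications $\bigl|\tfrac{x}{|x|^{n}}\cdot D\ch_{G}\bigr|(\cB_{r}\setminus\cB_{\rho})=\int_{M\cap(\cB_{r}\setminus\cB_{\rho})}\frac{|x\cdot\nu|}{|x|^{n}}\,d\Hau^{n-1}$, $\;|x|^{1-n}|D\ch_{G}|(\cB_{r}\setminus\cB_{\rho})=\int_{M\cap(\cB_{r}\setminus\cB_{\rho})}|x|^{1-n}\,d\Hau^{n-1}$, $f(r)=r^{1-n}|D\ch_{G}|(\cB_{r})$, $f(\rho)=\rho^{1-n}|D\ch_{G}|(\cB_{\rho})$, and the previous display, which bounds the middle factor $\int_{M\cap(\cB_{r}\setminus\cB_{\rho})}\frac{(x\cdot\nu)^{2}}{|x|^{n+1}}\,d\Hau^{n-1}$ by twice the quantity in braces on the right of \eqref{eq:MonotSet}.

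I expect the main obstacle to be the localization of the minimality gap at scale $t$ in the one--scale estimate: one must run the cone comparison with competitors supported in $\overline{B_{\sigma}}\Subset B_{t}$, so as to stay inside the admissible class $\{F:\ F\difsim G\Subset B_{t}\cap\overline{\Omega_{0}}\}$, and only afterwards pass $t\downarrow\sigma$ by monotonicity (hence a.e.\ continuity) of $t\mapsto\Psi_{\Omega_{0}}(G;B_{t})$; a secondary nuisance is the correct accounting, through the coarea formula, of the singular part $D^{s}u$ of $t\mapsto\prz(G;B_{t})$ --- i.e.\ of the set $M_{0}$ where $\nu$ is radial --- which, however, enters the monotonicity with a favourable sign. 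Note that convexity of $\Omega_{0}$ is never used here: only that $\Omega_{0}$ is a (Lipschitz) cone with vertex at $0$.
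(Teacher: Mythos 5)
Your argument is correct, and at its core it is the same cone-comparison monotonicity argument (with the minimality-gap correction $\Psi_{\Omega_{0}}$) that the paper uses, following Giusti's Lemma 5.8: compare with the radial cone extension at scale $t$, derive a differential inequality for $t^{1-n}|D\ch_{G}|(\cB_{t})$ with the error $(n-1)t^{-n}\Psi_{\Omega_{0}}(G;B_{t})$, integrate, and finish with Cauchy--Schwarz. The difference is purely in the technical framework: the paper first proves the estimate for smooth $BV$ functions $f$ on $\cB_{R}$ (with a functional minimality gap) and then approximates $\ch_{G}$, whereas you work directly with the set $G$, gluing the cone over the spherical slice $G\cap\de B_{\sigma}$ as a competitor and using slicing/coarea on the reduced boundary $M=\de^{*}G\cap\Omega_{0}$. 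Your route avoids the final approximation step of the paper (where one must relate the gap of the smooth approximants to $\Psi_{\Omega_{0}}(G;B_{t})$, a point the paper passes over quickly), at the price of the standard slicing technicalities (traces on $\de B_{\sigma}$ for a.e.\ $\sigma$, identification of the spherical reduced boundary of the slice with $M\cap\de B_{\sigma}$, the cone perimeter formula); both are legitimate. One small imprecision: your claim that the singular part $D^{s}u$ of $u(t)=\prz(G;B_{t})$ is purely atomic and equals exactly the contribution of $M_{0}=\{x\in M:\ \nu\ \text{radial}\}$ is not justified (and is not needed). It is cleaner to decompose $Du$ as the push-forward under $|x|$ of $\Hau^{n-1}$ restricted to $M\setminus M_{0}$ (which is absolutely continuous with density $\int_{(M\setminus M_{0})\cap\de B_{t}}\theta^{-1}\,d\Hau^{n-2}$) plus the push-forward from $M_{0}$; then
\[
f(r)-f(\rho)+(n-1)\int_{\rho}^{r}t^{-n}\Psi_{\Omega_{0}}(G;B_{t})\,dt\ \ge\ \int_{(M\setminus M_{0})\cap(\cB_{r}\setminus\cB_{\rho})}\frac{1-\theta}{|x|^{n-1}}\,d\Hau^{n-1}+\int_{M_{0}\cap(\cB_{r}\setminus\cB_{\rho})}\frac{d\Hau^{n-1}}{|x|^{n-1}}\,,
\]
which yields your penultimate display without deciding whether the $M_{0}$ contribution is singular; the rest of your proof is unaffected. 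Your closing observation that only the conical (Lipschitz) structure of $\Omega_{0}$, and not convexity, is used, matches the paper.
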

Theorem \ref{thm:monot} is well-known under the assumption that the origin is an internal point, and $B_{r}\subset \Omega$, see \cite{Giu84book}, \cite{maggi2012sets}. When the origin is a vertex of $\Omega_{0}$, the classical proof is adapted without much effort, and we provide it here for the reader's convenience.
\begin{remark}\label{rem:monot}
We note that, if $G$ is a perimeter minimizer in $\Omega_{0}$, then $\Psi_{\Omega_{0}}(E;B_{r}) = 0$, hence by \eqref{eq:MonotSet} one obtains that $r^{1-n}\prz(G,B_{r})$ is non-decreasing w.r.t. $r$.
\end{remark}

\begin{proof}[Proof of Theorem \ref{thm:monot}]
We follow the proof of \cite[Lemma 5.8]{Giu84book}. First, we take $f \in \C^{\infty}(\cB_{R}) \cap BV(\cB_{R})$ and prove that, for almost all $0 < \rho < r < R$, one has
	\begin{align} \label{MonotBV}
		& \left\{ \left| \dfrac{x}{|x|^{n}} \cdot Df \right| (\cB_{r} \setminus \cB_{\rho})  \right\}^{2} \leq 2 \,  |x|^{1-n} \cdot |Df|(\cB_{r} \setminus \cB_{\rho}) \cdot \\ 
		& \left\{ r^{1-n} \, |Df|(\cB_{r}) - \rho^{1-n} \, |Df|(\cB_{\rho}) + (n-1) \, \int_{\rho}^{r} t^{-n} \, \Psi_{\Omega_{0}}(f;t) \dd t \right\} \, , \nonumber
	\end{align}
where 
\begin{equation}\label{eq:monforfunct}
\Psi_{\Omega_{0}}(f;t) \equiv |Df|(\cB_{t}) - \inf \{ |Dg|(\cB_{t}) :\ g \in BV(\cB_{t}), \, \spt(g - f) \subset \subset B_{t} \cap \overline{\Omega_{0}} \} \, .
\end{equation}
For $0 < t < R$, let
	\begin{equation*}
		f_{t}(x) = 
		\begin{cases}
			f(x) &\text{if $t < |x| < R$} \\
			f \left( t \dfrac{x}{|x|} \right) &\text{if $0 < |x| < t$.}
		\end{cases}
	\end{equation*}
Standard computations yield
	\begin{equation*}
		\int_{\cB_{t}} |Df_{t}| \dd x = \dfrac{t}{n-1} \, \int_{\de B_{t} \cap \Omega_{0}} |Df| \left\{ 1 - \dfrac{\langle x, Df \rangle^{2}}{|x|^{2} |Df|^{2}} \right\}^{1/2} \dd \Hau^{n-1} \, .
	\end{equation*}
	By definition of $\Psi_{\Omega_{0}}(f;t)$, we have
	\begin{align*}
		\int_{\cB_{t}} |Df| \dd x - \Psi_{\Omega_{0}}(f;t) & \leq \int_{\cB_{t}} |Df_{t}| \dd x \\
		& = \dfrac{t}{n-1} \, \int_{\de B_{t} \cap \Omega_{0}} |Df| \left\{ 1 - \dfrac{\langle x, Df \rangle^{2}}{|x|^{2} |Df|^{2}} \right\}^{1/2} \dd \Hau^{n-1} \\
		& \leq \dfrac{t}{n-1} \, \int_{\de B_{t} \cap \Omega_{0}} |Df| \dd \Hau^{n-1} - \dfrac{t}{2(n-1)} \, \int_{\de B_{t} \cap \Omega_{0}} \dfrac{\langle x, Df \rangle^{2}}{|x|^{2} |Df|} \dd \Hau^{n-1} \, .
	\end{align*}
	Multiplying both sides by $(n-1) \, t^{-n}$, we get
	\begin{align} \label{DevConEst}
		\dfrac{t^{1-n}}{2} \, \int_{\de B_{t} \cap \Omega_{0}} \dfrac{\langle x, Df \rangle^{2}}{|x|^{2} |Df|} \dd \Hau^{n-1} & \leq t^{1-n} \, \int_{\de B_{t} \cap \Omega_{0}} |Df| \dd \Hau^{n-1} - \\
		& - (n-1) \, t^{-n} \, \int_{\cB_{t}} |Df| \dd x + (n-1)t^{-n} \, \Psi_{\Omega_{0}}(f;t) \nonumber \\
		& = \dfrac{\dd}{\dd t} \left( t^{1-n} \, \int_{\cB_{t}} |Df| \dd x \right) + (n-1)t^{-n} \, \Psi_{\Omega_{0}}(f;t) \nonumber \, ,
	\end{align}
and, by integrating \eqref{DevConEst} with respect to $t$ between $\rho$ and $r$, we obtain
	\begin{equation} \label{Int1}
		\dfrac{1}{2} \, \int_{\cB_{r} \setminus \cB_{\rho}} \dfrac{\langle x, Df \rangle^{2}}{|x|^{n+1} |Df|} \dd x \leq r^{1-n} \, \int_{\cB_{r}} |Df| \dd x - \rho^{1-n} \, \int_{\cB_{\rho}} |Df| \dd x + (n-1) \, \int_{\rho}^{r} t^{-n} \, \Psi_{\Omega_{0}}(f;t) \dd t \, .
	\end{equation}
By H\"older's inequality, we find that
	\begin{equation} \label{Int2}
		\left\{ \int_{\cB_{r} \setminus \cB_{\rho}} \dfrac{|\langle x, Df \rangle|}{|x|^{n}} \dd x \right\}^{2} \leq \left\{ \int_{\cB_{r} \setminus \cB_{\rho}} |x|^{1-n} |Df| \dd x \right\} \left\{ \int_{\cB_{r} \setminus \cB_{\rho}} \dfrac{\langle x, Df \rangle^{2}}{|x|^{n+1} |Df|} \dd x \right\} \, .
	\end{equation}
Then, the combination of \eqref{Int1} and \eqref{Int2} gives the desired estimate \eqref{MonotBV}. Finally, we can approximate $\chi_{E}$ by a sequence of smooth $BV$ functions and pass to the limit in \eqref{MonotBV}, thus getting \eqref{eq:MonotSet}.
\end{proof}
\section{Flatness of the blow-up of an almost-minimizer}
Thanks to Lemma \ref{BlowUpConv}, we know that any blow-up sequence of the almost-minimizer $E$ in $\Omega$, with respect to $0 \in \de \Omega$, converges to a minimizer $E_{0}$ in the tangent cone $\Omega_{0}$, up to subsequences. Applying Lemma \ref{BlowUpConv} to $E_{0}$, we deduce the existence of a sequence $s_{j} \searrow 0$ and a set $E_{00}$ minimizing the relative perimeter in $\Omega_{0}$, such that
\begin{equation}\label{eq:E00}
(E_{0})_{s_{j}} \longrightarrow E_{00} \qquad \text{in $L^{1}_{loc}(\Omega_{0})$.}
\end{equation}
Actually, up to a diagonal argument, one can even show that there exists a suitable sequence $t_{j}\searrow 0$ such that $E_{t_{j}} \to E_{00}$ in $L^{1}_{loc}(\R^{n})$. A consequence of Theorem \ref{thm:monot} is the following
\begin{proposition} \label{thm:mincone}
The set $E_{00}$ is a perimeter-minimizing cone with a vertex at the origin. 
\end{proposition}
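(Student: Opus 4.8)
\medskip
The plan is to realize $E_{00}$ as a blow-up of the \emph{minimizer} $E_0$ at the origin and to deduce, via the conical deviation estimate of Theorem~\ref{thm:monot}, that the radial part of $D\chi_{E_{00}}$ vanishes on $\Omega_0$; by the classical $BV$-characterization of cones this forces $t\,E_{00}=E_{00}$ for every $t>0$. Recall from \eqref{eq:E00} that $E_{00}$ minimizes $\prz$ in $\Omega_0$ and that $(E_0)_{s_j}\to E_{00}$ in $L^1_{\loc}(\Omega_0)$. Since $E_0$ is a perimeter minimizer in the cone $\Omega_0$ we have $\Psi_{\Omega_0}(E_0;B_t)\equiv 0$, so Remark~\ref{rem:monot} tells us that $\theta(r):=r^{1-n}\prz(E_0;B_r)$ is non-decreasing on $(0,+\infty)$; being nonnegative, it admits a finite limit $\Theta_0:=\lim_{r\to 0^+}\theta(r)=\inf_{r>0}\theta(r)$. (If $E_0$ coincides, modulo $\Leb^n$-negligible sets, with $\emptyset$ or with $\Omega_0$ near the origin, then $E_{00}$ is trivially a cone, so we discard this degenerate case.)

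First I would apply the monotonicity inequality \eqref{eq:MonotSet} to $G=E_0$ on the spherical shells $\cB_{\lambda s}\setminus\cB_s$, with $\lambda>1$ fixed and $s\to 0^+$. Since $\Psi_{\Omega_0}(E_0;\cdot)\equiv 0$, the right-hand side bracket reduces to $\theta(\lambda s)-\theta(s)$, which tends to $0$ as $s\to 0^+$; the weighted total-variation prefactor is controlled by
\[
\int_{\cB_{\lambda s}\setminus\cB_s}|x|^{1-n}\,d|D\chi_{E_0}|\ \le\ s^{1-n}\,\prz(E_0;B_{\lambda s})\ =\ \lambda^{n-1}\,\theta(\lambda s)\ \le\ \lambda^{n-1}\,\theta(\lambda)\,,
\]
uniformly for $0<s<1$. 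Hence $\big|\tfrac{x}{|x|^{n}}\cdot D\chi_{E_0}\big|(\cB_{\lambda s}\setminus\cB_s)\to 0$ as $s\to 0^+$ (initially for a.e.\ $s$, and then for every $s$, since both sides of \eqref{eq:MonotSet} — more precisely of the intermediate estimates \eqref{Int1}--\eqref{Int2} in its proof — are monotone in the two radii). Because $\chi_{(E_0)_s}(\cdot)=\chi_{E_0}(s\,\cdot)$, the map $x\mapsto x|x|^{-n}$ is $(-n)$-homogeneous, and $(\Omega_0)_s=\Omega_0$, a change of variables yields $\int\varphi(x)\,\tfrac{x}{|x|^n}\cdot d(D\chi_{(E_0)_s})(x)=\int\varphi(y/s)\,\tfrac{y}{|y|^n}\cdot d(D\chi_{E_0})(y)$ for all $\varphi\in C_c(\Omega_0)$, whence $\big|\tfrac{x}{|x|^{n}}\cdot D\chi_{(E_0)_s}\big|(\cB_\lambda\setminus\overline{\cB_1})\le\big|\tfrac{x}{|x|^{n}}\cdot D\chi_{E_0}\big|(\cB_{\lambda s}\setminus\cB_s)\to 0$.

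Next I would pass to the limit along $s=s_j\to 0$. As $(E_0)_{s_j}\to E_{00}$ in $L^1_{\loc}(\Omega_0)$ with locally uniformly bounded perimeters, $D\chi_{(E_0)_{s_j}}\weakto D\chi_{E_{00}}$ as Radon measures; since $x\mapsto x|x|^{-n}$ is smooth and bounded on the open shell $A:=\Omega_0\cap(B_\lambda\setminus\overline{B_1})$, which stays away from the origin, it follows that $\tfrac{x}{|x|^n}\cdot D\chi_{(E_0)_{s_j}}\weakto\tfrac{x}{|x|^n}\cdot D\chi_{E_{00}}$ on $A$, and lower semicontinuity of the total variation gives $\big|\tfrac{x}{|x|^n}\cdot D\chi_{E_{00}}\big|(A)=0$. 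Running the same argument on every shell $\Omega_0\cap(B_b\setminus\overline{B_a})$, $0<a<b$, and letting $a\to 0^+$, $b\to+\infty$, we obtain that $\tfrac{x}{|x|}\cdot D\chi_{E_{00}}=0$ as a measure on $\Omega_0$, i.e.\ $x\cdot\nu_{E_{00}}(x)=0$ for $\Hau^{n-1}$-a.e.\ $x\in\de^*E_{00}\cap\Omega_0$. By the standard characterization of cones via this radial condition (see e.g.\ \cite{Giu84book,maggi2012sets}), $t\,E_{00}=E_{00}$ modulo $\Leb^n$-negligible sets for every $t>0$; being in addition a minimizer of $\prz$ in $\Omega_0$, $E_{00}$ is a perimeter-minimizing cone with a vertex at the origin, as claimed.

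The step I expect to be the main obstacle is the rigorous implementation of the first one: one must bound the weighted total-variation prefactor \emph{uniformly in $s$} — which is exactly where the monotonicity of $\theta$ (rather than the mere finiteness of $\prz(E_0;B_r)$) is used — verify the scale invariance of the radial-derivative measure under the dilations $(E_0)_s$, and remove the ``almost every $0<\rho<r$'' restriction in \eqref{eq:MonotSet} so as to control the fixed shells $\cB_{\lambda s_j}\setminus\cB_{s_j}$ along the given sequence. Once these points are settled, the weak-$\ast$ passage to the limit and the invocation of the cone characterization are routine.
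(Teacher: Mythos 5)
Your argument is correct, and it differs from the paper's at the limit-passage step, so a brief comparison is in order. The paper first proves that the density ratio $R^{1-n}\prz(E_{00};B_R)$ is constant in $R$: it combines the monotonicity of $p(t)=t^{1-n}\prz(E_{0};B_t)$ (Remark \ref{rem:monot}) with the weak-$*$ convergence of the perimeter measures of $(E_0)_{s_j}$ to that of $E_{00}$ --- a nontrivial property of perimeter (almost-)minimizers quoted from \cite{tamanini1982boundaries} --- and only then applies \eqref{eq:MonotSet} to $E_{00}$ itself (where $\Psi_{\Omega_0}\equiv 0$) to kill the radial part of $D\ch_{E_{00}}$. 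You instead apply \eqref{eq:MonotSet} to $E_0$ on the shells $\cB_{\lambda s}\setminus\cB_{s}$, note that the bracket reduces to $\theta(\lambda s)-\theta(s)\to 0$ by the very same monotonicity, bound the weighted prefactor uniformly by $\lambda^{n-1}\theta(\lambda)$, transfer the resulting smallness to the rescaled sets through the exact scale invariance of the measure $\tfrac{x}{|x|^{n}}\cdot D\ch$, and conclude by lower semicontinuity of the total variation under weak-$*$ convergence on open shells away from the origin. Thus your route needs only semicontinuity plus local perimeter bounds, and is independent of the Tamanini-type convergence of perimeter measures; the paper's route, conversely, is shorter because applying the monotonicity formula directly to the limit set dispenses with the scaling identity and with the removal of the ``a.e.\ radii'' restriction (your fix for the latter --- monotonicity of both sides in the two radii together with nearby admissible shells, e.g.\ $\rho\in(s/2,s)$, $r\in(\lambda s,2\lambda s)$, is sound since $\theta(2\lambda s)-\theta(s/2)\to 0$ as well). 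Both proofs then invoke the same characterization of cones knowing $x\cdot\nu_{E_{00}}=0$ only $\Hau^{n-1}$-a.e.\ on $\de^{*}E_{00}\cap\Omega_{0}$, which suffices because $\Omega_{0}$ is itself a cone, and both use that $E_{00}$ is a minimizer of $\prz$ in $\Omega_{0}$, already provided by Lemma \ref{BlowUpConv} applied to $E_{0}$.
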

\begin{proof}
The argument requires a slight variant of the proof of \cite[Theorem 9.3]{Giu84book}. Let $s_{j} \searrow 0$ be such that $(E_{0})_{s_{j}}$ converges to $E_{00}$ in $L^{1}_{loc}(\R^{n})$. For $t > 0$, we let $p(t) := t^{1-n} \, \prz(E_{0},B_{t})$ and note that, by Remark \ref{rem:monot}, $p$ is non-decreasing in $t$ and, for all $R > 0$, we have
	\begin{equation*}
		p(tR) = R^{1-n}\prz((E_{0})_{t},B_{R}) \, .
	\end{equation*}
Consequently, as the perimeter measure of $(E_{0})_{s_{j}}$ weakly-$*$ converges to that of $E_{00}$ by a well-known property of perimeter almost--minimizers (see, e.g., \cite{tamanini1982boundaries}), we deduce that, for almost every $R>0$, 
    \begin{equation}\label{eq:indepratio}
        R^{1-n} \prz(E_{00},B_R) = \lim_{j\to\infty} p(s_j R)\,.      
    \end{equation}
    At the same time, the monotonicity of $p(t)$ implies that
    \[
    \lim_{j\to\infty} p(s_j R) = \lim_{t\to 0^+}p(t)\,,
    \]
    hence the limit does not depend on $R$. Then by approximating a generic radius $R$ by means of sequences of smaller/larger radii for which \eqref{eq:indepratio} is satisfied, one easily obtains that 
\[
R^{1-n} \prz(E_{00},B_R) = \prz(E_{00},B_1)\qquad \text{for all $R>0$.}
\]
We can now apply \eqref{eq:MonotSet} and get
\[
\left| x \cdot D\ch_{E_{00}} \right| (B_{r} \setminus B_{\rho}) = 0 \qquad \text{for all $0 < \rho < r$.}
\]
This implies that
\[
\langle x, \nu_{E_{00}}(x) \rangle = 0\qquad \text{for $\Hau^{n-1}$-a.e. $x \in \de^{*} E_{00}$.}
\]
By \cite[Proposition 28.8]{maggi2012sets}, we conclude that $E_{00}$ is, up to null sets, a cone with vertex at the origin.
\end{proof}

\subsection{Characterization of the conical minimizer in $\R^{3}$.} Starting from an almost-minimizer $E$ in $\Omega$ which satisfies volume density estimates at the origin, and applying Lemma \ref{BlowUpConv} at most twice, we have obtained a conical minimizer $E_{00}$ of the relative perimeter in the tangent cone $\Omega_{0}$. The next theorem shows that, in dimension $n=3$, $\de E_{00} \cap \Omega_{0}$ coincides with a convex angle contained in a $2$-plane through the origin, that meets $\de \Omega_{0}$ orthogonally. 
\begin{theorem} \label{thm:geod}
Let $n=3$ and $E_{00}$ be the conical minimizer obtained in the previous subsection. Then $\de E_{00}\cap \Omega_{0}$ coincides with a $2$-plane intersected with $\Omega_{0}$, that meets $\de \Omega_{0}\setminus \{0\}$ orthogonally. 
\end{theorem}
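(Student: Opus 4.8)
The strategy is to reduce the claim to a one–dimensional free–boundary problem on $\Sph^2$ and to exploit the convexity of $\Omega_0$ at two crucial moments. Write $U:=\Omega_0\cap\Sph^2$, a relatively open, geodesically convex, \emph{proper} subset of $\Sph^2$, and $\Sigma:=\partial E_{00}\cap\Sph^2\cap\Omega_0$. Since, by Proposition \ref{thm:mincone}, $E_{00}$ is a perimeter–minimizing cone with vertex $0$, the set $\partial E_{00}\cap\Omega_0$ is the cone with vertex $0$ spanned by $\Sigma$, and $\Sigma$ is the relative boundary in $U$ of $A:=E_{00}\cap\Sph^2$.

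\emph{Step 1: $\partial E_{00}\cap\Omega_0$ is a union of planar sectors through the origin.} Because $n=3$, De Giorgi's interior regularity for perimeter minimizers together with the dimensional bound $\dim(\mathrm{sing})\le n-8<0$ (\cite[Part III]{maggi2012sets}) shows that $\partial E_{00}\cap\Omega_0$ is a smooth, embedded minimal surface; being a minimal \emph{cone}, its mean curvature vanishes identically, and the identity expressing the mean curvature of the cone over a curve $\gamma\subset\Sph^2$ at the point $r\omega$ as $\kappa_g(\omega)/r$ (with $\kappa_g$ the geodesic curvature of $\gamma$) forces $\kappa_g\equiv 0$ on $\Sigma$. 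Hence each connected component of $\Sigma$ lies on a great circle $C=\Pi\cap\Sph^2$ with $\Pi$ a $2$–plane through $0$, and $\partial E_{00}\cap\Omega_0$ is a union of planar sectors $\Pi\cap\Omega_0$. Since $U$ is geodesically convex and proper it contains no complete great circle, so $C\cap U$ is a single arc; and since the (smooth) reduced boundary of $A$ cannot terminate at an interior point of $U$, each component of $\Sigma$ coincides with the full arc $C\cap U$, whose two endpoints lie on $\partial U$.

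\emph{Step 2: orthogonality at the smooth part of $\partial\Omega_0$.} Along the portion of $\partial\Omega_0$ which is of class $C^{1,1}$ (here, smooth away from the origin and from the singular rays of the cone), the boundary regularity theory for minimizers of the relative perimeter (\cite{DePhilippisMaggi2015}; see also \cite{gruter1986allard}) gives that $\partial E_{00}$ is $C^1$ up to the boundary and meets it orthogonally, so each arc $C\cap U$ above meets the smooth part of $\partial U$ at right angles. \emph{Step 3: $\partial E_{00}\cap\Omega_0$ is connected.} By the concavity of the isoperimetric profile of a convex set and the ensuing structure theorem for local minimizers of the relative perimeter — Sternberg–Zumbrun \cite{SternbergZumbrun1999} and its extensions to arbitrary convex sets \cite{Milman2009,LRV18} — $\partial E_{00}\cap\Omega_0$ is either connected or a union of pairwise parallel planes (each orthogonal to $\partial\Omega_0$); distinct parallel planes cannot share the point $0$, so in all cases $\partial E_{00}\cap\Omega_0=\Pi\cap\Omega_0$ for a single $2$–plane $\Pi$ through $0$, and by Step 1 the curve $\Sigma=\Pi\cap\Sph^2\cap U$ is a single geodesic arc.

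\emph{Step 4: $\Pi$ contains no singular ray of $\partial\Omega_0$ (so the orthogonality is global).} By Steps 1–2 it only remains to exclude that one endpoint of the arc $\Pi\cap\Sph^2\cap U$ is a non–smooth point of $\partial U$, i.e.\ that $\overline{\partial E_{00}\cap\Omega_0}$ contains a ray $\ell\subset\partial\Omega_0$ along which $\partial\Omega_0$ has a genuine edge (if $\Pi$ met $\ell$ at a point other than $0$ it would contain $\ell$, and a plane containing an edge ray cannot be orthogonal to both faces adjoining it). Suppose such an $\ell$ exists, fix $q\in\ell\setminus\{0\}$; the boundary density estimates, which pass to $E_{00}$ as blow–ups of those for $E$, give $q\in\overline{\partial E_{00}\cap\Omega_0}$, so by Lemma \ref{BlowUpConv} a blow–up of $E_{00}$ at $q$ is a perimeter–minimizing cone in $T_q\Omega_0$, which is isometric to $\R\times W$ with $W\subset\R^2$ a convex wedge of opening $\theta<\pi$, the factor $\R$ being the direction of $\ell$. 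Since near $q$ the surface $\partial E_{00}$ coincides with the smooth plane $\Pi$, this blow–up is the plane $\Pi$ itself, which contains the edge line $\R\times\{0\}$; thus it is a half–plane hinged along the edge of $\R\times W$. But such a half–plane is not relative–perimeter minimizing with free boundary when $\theta<\pi$: the flat cross–section $\{0\}\times W$ is a competing free–boundary minimal surface with strictly smaller relative perimeter (in $B_R$ one has $\tfrac{\theta}{2}R^2$ against $\tfrac{\pi}{2}R^2$), and a suitable localized interpolation realizes this gain against $E_{00}$ — a contradiction. Therefore both endpoints of $\Sigma$ are smooth points of $\partial U$, and Step 2 yields that $\Pi$ meets all of $\partial\Omega_0\setminus\{0\}$ that it touches orthogonally, which proves the theorem.

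\emph{Expected main difficulty.} Step 4 is the crux: turning the heuristic ``a half–plane hinged on the edge of a convex wedge is not minimizing'' into a rigorous comparison forces one to construct a localized competitor for the cone (a conical interpolation in an annulus $B_R\setminus B_{sR}$ whose transition cost is of lower order than the perimeter gain), which is precisely the two–dimensional, edge–type precursor of Theorem \ref{thm:VS} itself. A secondary technical point is to check that the boundary regularity invoked in Step 2 and the structure theorem invoked in Step 3 apply to the possibly unbounded, merely Lipschitz convex cone $\Omega_0$.
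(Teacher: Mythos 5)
Your skeleton (planar sectors through $0$, orthogonality at smooth boundary points, uniqueness of the arc, exclusion of edge rays) parallels the paper, but two of your steps rest on ingredients that are not available in this setting, and they are exactly where the real work lies. In Step 3 you invoke the Sternberg--Zumbrun structure theorem (and its extensions) to get connectivity; that theorem is proved for local minimizers in bounded convex domains with $C^{2,\alpha}$ boundary, via second-variation/isoperimetric-profile arguments that require performing variations up to a smooth boundary. Here $\Omega_{0}$ is an unbounded convex cone whose boundary has a vertex and possibly edges --- precisely the singular situation this paper is about --- and none of the cited references supplies the ``connected or union of parallel planes'' alternative in that generality, so uniqueness of the arc cannot simply be cited. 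The paper instead proves it by a self-contained spherical-geometry argument: assuming two disjoint geodesic arcs, it builds a geodesic quadrilateral from them and the meridians through their endpoints and derives a contradiction from angle sums, using crucially the right angles and the smoothness of $\partial\Omega_{0}$ at the endpoints established beforehand. (A related, smaller issue affects your Step 2: De Philippis--Maggi and Gr\"uter--Jost require $C^{1,1}$/smooth boundary, while a general convex cone need only be differentiable at a ``smooth'' boundary ray, so even the orthogonality needs the blow-up argument rather than a citation.)

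In Step 4 the comparison as stated is not a valid minimality test: the cross-section $\{0\}\times W$ does not coincide with the hinged half-plane outside any compact set, so the inequality $\tfrac{\theta}{2}R^{2}<\tfrac{\pi}{2}R^{2}$ proves nothing until you produce a competitor agreeing with $E_{00}$ near $\partial B_{R}$, and the interpolation cost is a priori of the same order $R^{2}$ as the gain --- you acknowledge this is the crux and leave it unproven, so it is a genuine gap. The missing idea that makes this step elementary is dimension reduction rather than a 3D interpolation: blowing up at a point of the edge ray (or, as the paper does, already at an endpoint $p$ of the arc on $\de B_{1}$) and applying the Federer-type reduction of Lemma \ref{thm:FedererReduction} yields a cone $G$ minimizing the \emph{relative} perimeter in a planar wedge $C$; if the wedge has opening $<\pi$ (or if $\de G$ consists of more than one ray), replacing an initial portion of a boundary ray of $G$ by its orthogonal projection onto the nearest side of $C$ strictly decreases length, a contradiction. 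This single two-dimensional argument is how the paper simultaneously obtains smoothness of $\partial\Omega_{0}$ at the endpoints, the orthogonality (Young's condition with $\gamma=0$), and the exclusion of edge contact, before the uniqueness step; your proposal defers all of this to an unconstructed localized competitor.
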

\begin{proof}
Set $F=E_{00}$ for brevity, then the proof is accomplished by showing that there exists exactly one geodesic arc $\gamma \subset \de B_{1} \cap \Omega_{0}$ such that
\begin{equation} \label{eq:geodesic}
\de F \cap \de B_{1} \cap \Omega_{0} = \gamma \, ,
\end{equation}
and $\gamma$ meets $\de \Omega_{0}\cap \de B_{1}$ orthogonally. 
We split the proof into some steps.

\textit{Step 1.} We claim that $\de F \cap \de B_{1} \cap \Omega_{0}$ is made of countably-many (open) geodesic arcs $\gamma_{i}$, $i\in \N$, such that
\begin{equation*}
\gamma_{i} \cap \gamma_{j} = \emptyset, \, \text{for $i \neq j$,} \quad \bigcup_{i} \gamma_{i} = \de F \cap \de B_{1} \cap \Omega_{0} \, .
\end{equation*}
By interior regularity, $\de F \cap \Omega_{0}$ is smooth, and its outer normal vector $\nu_{F}$ is orthogonal to the radial directions (recall that $F$ is a cone with vertex at the origin). Hence, $\de F$ intersects transversally $\de B_{1} \cap \Omega_{0}$ along smooth curves $\gamma_{i}$ that cannot cross each other. Since $F$ has locally-finite perimeter, the family of these curves is at most countable. Let us now show that $\gamma_{i}$ is a geodesic arc, for all $i$. With a slight abuse of notation, we identify $\gamma_i$ with its arc-legth parametrization defined on the interval $(0,L_{i})$, where $L_i$ is the length of the curve. The connected component of $\de F$ that intersects $\de B_1$ along $\gamma_{i}$ can be then parametrized through
\begin{equation} \label{eq:pargeo}
\sigma_{i}(s,t) := s \, \gamma_{i}(t) \, \text{, for $s>0$, $t \in (0,L_{i})$.}
\end{equation}
We can choose the parametrization $\gamma_{i}(t)$ in such a way that
$\nu_{F}(\sigma_{i}(s,t)) = \gamma_i(t) \times \gamma_i'(t)$, for all $s>0$. Exploiting \eqref{eq:pargeo}, and using $\div_{\de F} \nu_{F} = 0$ by the minimality of $F$, we infer
\begin{align} \label{eq:noproj}
0 &= \div_{\de F} \nu_{F} (\sigma_{i}(s,t)) \\
& = \dfrac{d}{dt}(\gamma_i(t) \times \gamma_i'(t)) \cdot \gamma_i'(t) \nonumber \\
& = - \gamma_i(t) \times \gamma_i'(t) \cdot \gamma_i''(t),\qquad \text{for all }t\,.
\end{align}
Since we also have $\gamma_i'(t) \cdot \gamma_i''(t) = 0$ by the choice of the arc-length parametrization, and observing that $\{\gamma_i'(t),\gamma_i(t) \times \gamma_i'(t)\}$ is an orthonormal basis for the tangent space to $\de B_{1}$ at $\gamma(t)$, we conclude that $\gamma_i''(t)$ is orthogonal to the tangent space to $\de B_{1}$ at $\gamma(t)$, which is precisely the definition of geodesic arc.
%
    \medskip 

\textit{Step 2.} We prove that  $\gamma_{i}$ meets $\de\Omega_{0}$ orthogonally at its endpoints. More precisely, if $p$ is an endpoint of $\gamma_i$, then $\Omega_0$ admits a unique supporting plane at $p$, hence the outer unit normal vector $\nu_{0}(p)$ to $\de \Omega_0$ at $p$ is well-defined, and moreover if we denote by $\nu_{i}$ the constant unit outer normal to the connected component of $\de F$ containing $\gamma_i$, we have
    \begin{equation} \label{eq:Young}
        \nu_{i} \cdot \nu_{0}(p) = 0 \, .
    \end{equation}
    Let us first prove that $\Omega_{0}$ admits a unique supporting plane at $p$. Up to a rotation, we may assume that $p = (0,0,1)$, hence it follows that $\nu_{i} \cdot e_{3} = 0$. Owing to Lemma \ref{thm:FedererReduction} below, we can find a sequence $t_{j} \searrow 0$ such that $\Omega_{0}^{p,t_{j}} := t_{j}^{-1}(\Omega_{0} - p)$ locally converge to a cylinder of type $C \times \R$, and $F^{p,t_{j}} := t_{j}^{-1}(F - p)$ locally converge to a cylinder that can be written as $G \times \R$, where $G \subset C$. Moreover, both $C$, and $G$ are cones with respect to $0$ in the plane $z = 0$, and $G$ is perimeter-minimizing in $C$. By convexity of $C$, up to a further rotation, we can assume that
    \begin{equation*}
        C = \{ (x_{1},x_{2},0) : \, x_{2} > \lambda |x_{1}| \} \, ,
    \end{equation*}
    for some $\lambda \geq 0$. Clearly, $\lambda = 0$ if and only if $p$ admits a unique supporting plane for $\Omega_{0}$. The only possibility is then that $\partial G\cap C$ is made of finitely many half-lines $L_{1},...,L_{k}$ of the form $L_{j} = \{ t \, v_{j} : \, t \geq 0 \}$, for some unit vectors $v_{j}$. Up to relabeling, we can assume 
    \begin{equation*}
        0< v_{1} \cdot e_{2} \leq v_{j} \cdot e_{2} \quad \text{for all }j\,.
    \end{equation*}
If either $\lambda > 0$ or $k>1$, we could replace an initial portion of $L_{1}$ with a projection segment onto the closest side of $C$, which strictly decreases the perimeter and thus contradicts the fact that $G$ is perimeter-minimizing in $C$. Hence we necessarily have $\lambda = 0$ and $k=1$, i.e., there exists a unique supporting plane to $\Omega_{0}$ at $p$ with $\nu_{0}(p)=-e_{2}$, and moreover $v_{1}=e_{2}$. This proves the claim and, additionally, shows that two different geodesic arcs cannot share a common endpoint.
\medskip

\textit{Step 3.} Finally, we prove that $\de F \cap \de B_{1} \cap \Omega_{0}$ is made of exactly one geodesic arc. 

Suppose by contradiction that there exist two geodesic arcs $\gamma_{1}\neq \gamma_{2}$ contained in $\de F \cap \de B_{1} \cap \Omega_{0}$. By the previous steps we know that 
\begin{equation}\label{eq:g1g2empty}
\overline{\gamma_{1}}\cap \overline{\gamma_{2}} = \emptyset\,.
\end{equation}
For $i=1,2$ we denote by $\Pi_{i}$ the plane through the origin that contains $\gamma_{i}$, and by $p_{i}$, $q_{i}$ the boundary points of $\gamma_{i}$  (see Figure \ref{tab:geod}). 

Then, we consider the point $N_{i}\in \de B_{1}$ such that $\gamma_{i}$ is contained in the equator with north pole $N_{i}$ and south pole $S_{i}=-N_{i}$, and denote by $\mu_{p_{i}}, \mu_{q_{i}}$ the corresponding meridians connecting $N_{i}$ with $S_{i}$ and passing through $p_{i}$ and $q_{i}$, respectively. These meridians bound a region of $\de B_{1}$, that we denote as $\Sigma_{i}$, which satisfies 
\[
\Omega_{0}\cap \de B_{1} \subset \Sigma_{i},\qquad i=1,2\,.
\]

\tdplotsetmaincoords{70}{30}
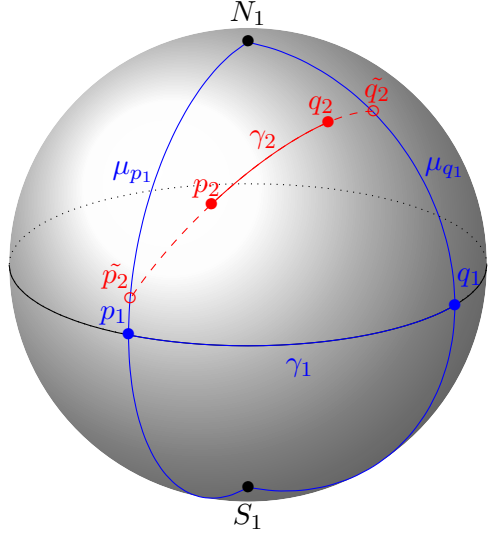
\begin{figure}
\begin{tikzpicture}[tdplot_main_coords,declare function={R=pi;}]
	\shade[tdplot_screen_coords,ball color=almond,opacity=0.5] (0,0) coordinate(O)
	circle[radius=R];
	\draw plot[variable=\x,domain=\tdplotmainphi-180:\tdplotmainphi,smooth]
	({R*cos(\x)},{R*sin(\x)},0);
	\draw[blue,pattern color=blue] 
	plot[variable=\x,domain=180:00,smooth] (0,{-R*sin(\x)},{R*cos(\x)})
	coordinate   (p1)
    plot[variable=\x,domain=0:180,smooth] ({R*sin(\x)},0,{R*cos(\x)})
	coordinate   (p2)
	plot[variable=\x,domain=0:90,smooth] ({R*cos(\x)},{-R*sin(\x)},0)
	coordinate   (p3);
	\draw[black,dotted,pattern color=black]
	plot[variable=\x,domain=-270:00,smooth] ({R*cos(\x)},{-R*sin(\x)},0)
	coordinate;
	\draw[red,dashed,color=red]
	plot[variable=\x,domain=-8:96,smooth] ({R*cos(\x)*(0.4472135955)},{-R*sin(\x)*(0.894427191) + R*cos(\x)*(0.2)*(0.8)},{R*cos(\x)*(0.8)*(0.894427191) + R*sin(\x)*(0.2)})
	coordinate;
	\draw[red,pattern color=red]
	plot[variable=\x,domain=15:60,smooth] ({R*cos(\x)*(0.4472135955)},{-R*sin(\x)*(0.894427191) + R*cos(\x)*(0.2)*(0.8)},{R*cos(\x)*(0.8)*(0.894427191) + R*sin(\x)*(0.2)})
	coordinate;

	\begin{scope}[on background layer]
		\foreach \X in {1,2}
		{ \draw[dotted] (O) -- (p\X); }
	\end{scope}
    \filldraw[black] (0,0,{pi+0.1}) node[anchor=south]{$N_{1}$};
    \filldraw[black] (0,0,{-pi-0.1}) node[anchor=north]{$S_{1}$};
    \filldraw[black] (0,-2.9,0.2) node[anchor=east]{\textcolor{blue}{$p_{1}$}};
    \filldraw[black] (3.72,0,0.5) node[anchor=east]{\textcolor{blue}{$q_{1}$}};
    \filldraw[black] (3,-3.2,0.1) node[anchor=east]{\textcolor{blue}{$\gamma_{1}$}};
    \filldraw[black] (0,-2.2,2) node[anchor=east]{\textcolor{blue}{$\mu_{p_{1}}$}};
    \filldraw[black] (3.45,0,2) node[anchor=east]{\textcolor{blue}{$\mu_{q_{1}}$}};
    \filldraw[black] (1.3,-1.2121,2.38) node[anchor=east]{\textcolor{red}{$\gamma_{2}$}};
    \filldraw[black] (1,-2.1810,1.95) node[anchor=east]{\textcolor{red}{$p_{2}$}};
    \filldraw[black] (1.6,-0.241612,2.62) node[anchor=east]{\textcolor{red}{$q_{2}$}};
    \filldraw[black] (0,-2.84563,0.73) node[anchor=east]{\textcolor{red}{$\tilde{p_{2}}$}};
    \filldraw[black] (1.8,0.88837,2.515) node[anchor=east]{\textcolor{red}{$\tilde{q_{2}}$}};
    \foreach \Point in {(0,0,pi),(0,0,-pi)}{
    	\node at \Point {\textbullet};
    };
    \foreach \Point in {(0,-pi,0),(pi,0,0)}{
    	\node at \Point {\textcolor{blue}{\textbullet}};
    };
    \foreach \Point in {(1.3564,-0.241612,2.332781),(0.7021,-2.1810,1.6672)}{
    	\node at \Point {\textcolor{red}{\textbullet}};
    };
    \foreach \Point in {(1.3905,0.88837,2.137534),(-0.14678,-2.84563,0.389705)}{
    	\node at \Point {\textcolor{red}{$\circ$}};
    };
\end{tikzpicture}
\caption{A possible geometric configuration in the contradiction argument for the proof of Step 3.}\label{tab:geod}
\end{figure}

Incidentally, thanks to the previous step, $\Sigma_{i}$ is also obtained by intersecting the sphere $\de B_{1}$ with the wedge $W_{i}$ given by the intersection of the two supporting half-spaces to $\Omega_{0}$ at $p_{i}$ and $q_{i}$, respectively. Since in particular $W_{i}$ is a convex cone, it is immediate to check that $\Sigma_{i} = W_{i}\cap \de B_{1}$ is geodesically convex. 
Moreover, using the fact that the angle formed by the vectors $p_{i},q_{i}$ is strictly smaller than $\pi$ (recall that the origin is an isolated vertex for $\Omega_{0}$) we infer that the internal angle formed by the two geodesic sides of $\Sigma_{i}$, i.e. the meridians $\mu_{p_{i}}$ and $\mu_{q_{i}}$, at $N_{i}$ (or $S_{i}$) is strictly smaller than $\pi$. 

Now, observe that the closure of $\Sigma_{i}$ is the union of two closed geodesic triangles $T_{N_{i}},T_{S_{i}}$ with vertices $p_{i},q_{i},N_{i}$ and $p_{i},q_{i},S_{i}$, respectively. 
Since in particular $\gamma_{2}\subset \Omega_{0}\cap \de B_{1}\subset \Sigma_{1}$, and $\gamma_{2}$ has a strictly positive distance from $\gamma_{1}$, we must have that either $\gamma_{2}\subset T_{N_{1}}$ or $\gamma_{2}\subset T_{S_{1}}$. Without loss of generality, we assume $\gamma_{2}\subset T_{N_{1}}$. 

Now, we set $\tP_{2} = \Pi_{2}\cap \mu_{p_{1}}$ and $\tQ_{2} = \Pi_{2}\cap \mu_{q_{1}}$, and denote by $\tgamma_{2}$ the geodesic connecting $\tP_{2}$ and $\tQ_{2}$, and by $\tSigma_{2}$ the associated geodesically convex region bounded by the meridians $\mu_{\tP_{2}},\mu_{\tQ_{2}}$ meeting at poles $\tN_{2} = N_{2}$ and $\tS_{2}=S_{2}$. Clearly we have $\gamma_{2}\subset \tgamma_{2}$ and thus $\Sigma_{2}\subset \tSigma_{2}$. Moreover, we have
\[
\tP_{2},\tQ_{2}\in T_{N_{1}}\setminus \{p_{1},q_{1},N_{1}\}\,.
\]
Indeed, the geodesic $\tgamma_{2}$ cannot intersect $\gamma_{1}$, hence it is contained in $T_{N_{1}}$ and its closure is disjoint from $\gamma_{1}$ because $\Pi_{2}\cap \overline{\gamma_{1}} = \overline{\gamma_{2}}\cap \overline{\gamma_{1}}=\emptyset$, by \eqref{eq:g1g2empty}; moreover, if we had $\tP_{2} = N_{1}$ (or $\tQ_{2}=N_{1}$) we would conclude that $\mu_{q_{1}}\subset \Pi_{2}$ (respectively, $\mu_{p_{1}}\subset \Pi_{2}$), but this is impossible because $\Pi_{2}$ and $\overline{\gamma_{1}}$ are disjoint.

Now, consider the geodesic quadrilateral $D$ determined by the four points $p_{1},\tP_{2},\tQ_{2},q_{1}$. By the previous argument, $D\subset T_{N_{1}}$. Denote by $\alpha_{1},\talpha_{2},\tbeta_{2},\beta_{1}$ the angles formed by the pairs of geodesic sides meeting at the respective vertices. Then, consider the two geodesic triangles $R_{1} = p_{1}\tP_{2}q_{1}$ and $R_{2} = \tP_{2}\tQ_{2}q_{1}$. Call $\talpha_{2,1}$ the internal angle to $R_{1}$ at $\tP_{2}$, and $\talpha_{2,2}$ the internal angle to $R_{2}$ at $\tP_{2}$. Similarly, call $\beta_{1,1}$ the internal angle to $R_{1}$ at $q_{1}$, and $\beta_{1,2}$ the internal angle to $R_{2}$ at $q_{1}$. We thus have $\talpha_{2} = \talpha_{2,1}+\talpha_{2,2}$, $\beta_{1} = \beta_{1,1}+\beta_{1,2}$, and therefore we deduce
\begin{align}
\label{eq:abpimezzi}
\alpha_{1} = \beta_{1} = \pi/2\,,\\
\label{eq:tatbmenopimezzi}
\max(\talpha_{2},\tbeta_{2}) \le \pi/2\,.
\end{align}
Indeed, \eqref{eq:abpimezzi} follows from the orthogonality of $\gamma_{1}$ with the meridians $\mu_{p_{1}}$ and $\mu_{q_{1}}$, while \eqref{eq:tatbmenopimezzi} follows from the fact that the quadrilateral $D$ is contained in one of the two geodesic triangles $\widetilde{T}_{N_{2}} = \tP_{2}\tQ_{2}N_{2}$, $\widetilde{T}_{S_{2}} = \tP_{2}\tQ_{2}S_{2}$ (indeed, by a symmetric argument, we have either $\gamma_{1}\subset \widetilde{T}_{N_{2}}$ or $\gamma_{1}\subset \widetilde{T}_{S_{2}}$) and we know by construction that the internal angles to $\widetilde{T}_{N_{2}}$ (or $\widetilde{T}_{S_{2}}$) at $\tP_{2}$ and at $\tQ_{2}$ are both equal to $\pi/2$.

Now we notice that $R_{1}$ must be a non-degenerate geodesic triangle, because it possesses an internal angle at $p_{1}$ measuring $\alpha_{1} = \pi/2$, and the other two vertices do not coincide. Thus we have that the sum of the internal angles of $R_{1}$ satisfies
\begin{equation}\label{eq:angoliR1}
\alpha_{1} + \talpha_{2,1} + \beta_{1,1} > \pi\,.
\end{equation} 
At the same time, the sum of the internal angles of $R_{2}$ is not smaller than $\pi$:
\begin{equation}\label{eq:angoliR2}
\talpha_{2,2} + \tbeta_{2} + \beta_{1,2} \ge \pi\,.
\end{equation} 
By combining \eqref{eq:abpimezzi}, \eqref{eq:tatbmenopimezzi}, \eqref{eq:angoliR1} and \eqref{eq:angoliR2}, we reach the contradiction
\[
2\pi < (\alpha_{1} + \talpha_{2,1} + \beta_{1,1}) + (\talpha_{2,2} + \tbeta_{2} + \beta_{1,2}) = \alpha_{1} + \talpha_{2} + \tbeta_{2} + \beta_{1} \le 2\pi
\]
and this completes the proof of the theorem.
\end{proof}

Next we state a slight variant of the classical Federer's Reduction Lemma, which is used in the proof of Theorem \ref{thm:geod}. In what follows, by ``cone'' we shall always mean a cone with respect to the origin. 
\begin{lemma} \label{thm:FedererReduction}
Let $K \subset \R^{n}$ be a convex cone, $C \subset K$ be a minimizing cone for the relative perimeter. Let $x_{0} \in \de C \cap \de K \setminus \{ 0 \}$. For $t > 0$, we set
    \begin{equation*}
    C_{t} := x_{0} + \dfrac{C - x_{0}}{t} \, , \quad K_{t} := x_{0} + \dfrac{K - x_{0}}{t} \, .
    \end{equation*}
    Then there exist a sequence $t_{j} \searrow 0$ and two sets $C_{0}$, $K_{0}$ such that
    \begin{equation} \label{eq:convergence}
        C_{j} := C_{t_{j}} \longrightarrow C_{0} \, , \quad K_{j} := K_{t_{j}} \longrightarrow K_{0}
    \end{equation}
    in $L^{1}_{\loc}$-topology, $C_{0} \subset K_{0}$ and $C_{0}$, $K_{0}$ are cylinders with axis coinciding with the line joining $0$ to $x_{0}$. Moreover, the sets $C_{0}' := C_{0} \cap x_{0}^{\perp}$ and  $K_{0}' := K_{0} \cap x_{0}^{\perp}$ are cones with respect to $0$ in the hyperplane $x_{0}^{\perp}$, and $C_{0}'$ is perimeter-minimizing in $K_{0}'$.
\end{lemma}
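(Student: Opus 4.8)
The plan is to realize $C_{0}$ and $K_{0}$ as blow-up limits of $C$ and $K$ at the point $x_{0}$, and to exploit the fact that $C$ and $K$ are cones with vertex at the origin in order to force these limits to be invariant under translations along the line $\R x_{0}$.

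\emph{Step 1 (blow-up at $x_{0}$).} Since $K$ is convex, $\de K$ is Lipschitz; and since $C$ is a perimeter-minimizer it satisfies \eqref{eq:A-M} with $\psi\equiv 0$, so the boundary density estimates and the blow-up results of the preliminary section are available at the point $x_{0}$. If $C$ coincides mod null sets with $\emptyset$ or with $K$ in some ball $B_{r}(x_{0})$, then every blow-up of $C$ at $x_{0}$ equals $\emptyset$ or $K_{0}$ and the statement is trivial; so I may assume $\p(C;B_{\rho}(x_{0})\cap K)>0$ for all $\rho>0$. Then I would apply Lemma \ref{BlowUpConv} and Proposition \ref{thm:mincone} with $x_{0}$ in place of the origin — together with the diagonal argument preceding Proposition \ref{thm:mincone} — to obtain a sequence $t_{j}\searrow 0$ and a perimeter-minimizing cone $C_{0}$ with vertex $x_{0}$ in $K_{0}:=x_{0}+T_{x_{0}}K$, with $C_{j}:=C_{t_{j}}\to C_{0}$ in $L^{1}_{\loc}(\R^{n})$. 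By convexity of $K$, the sets $K_{t}$ increase to $K_{0}$ as $t\to 0^{+}$, hence $K_{j}:=K_{t_{j}}\to K_{0}$ along the same sequence, $K_{0}$ is a convex cone with vertex $x_{0}$, and $C_{0}\subset K_{0}$.

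\emph{Step 2 (cylinder structure).} The crucial observation is an elementary identity: writing $C_{t}=x_{0}+t^{-1}(C-x_{0})$ and using $(1+ts)C=C$ for $1+ts>0$ (valid because $C$ is a cone with vertex $0$), one checks
\begin{equation*}
C_{t}=(1+ts)\,C_{t}+s(1-t)\,x_{0},\qquad s\in\R,\ 1+ts>0,
\end{equation*}
and the same holds with $K$ in place of $C$. Since the affine maps $A\mapsto(1+t_{j}s)\,A+s(1-t_{j})\,x_{0}$ converge, as $j\to\infty$, to the translation $A\mapsto A+s\,x_{0}$ in the sense of $L^{1}_{\loc}$-convergence of characteristic functions, passing to the limit gives $C_{0}=C_{0}+s\,x_{0}$ and $K_{0}=K_{0}+s\,x_{0}$ for every $s\in\R$. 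Hence $C_{0}=C_{0}'\times\R x_{0}$ and $K_{0}=K_{0}'\times\R x_{0}$, with $C_{0}':=C_{0}\cap x_{0}^{\perp}\subset K_{0}':=K_{0}\cap x_{0}^{\perp}$; that is, $C_{0}$ and $K_{0}$ are cylinders with axis $\R x_{0}$.

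\emph{Step 3 (cross-sections).} The set $K_{0}'$ is convex because $K_{0}$ is; and since $C_{0}$ (resp.\ $K_{0}$) is a cone with vertex $x_{0}$ which is also translation-invariant along $\R x_{0}$, the dilations of $C_{0}$ (resp.\ $K_{0}$) about $x_{0}$ act on the cross-section $C_{0}'$ (resp.\ $K_{0}'$) exactly as the dilations about $0$ in $x_{0}^{\perp}$; therefore $C_{0}'$ and $K_{0}'$ are cones with vertex $0$ in $x_{0}^{\perp}$. It then remains to prove that $C_{0}'$ is perimeter-minimizing in $K_{0}'$, which I would do by the standard dimensional-reduction argument: given a competitor $F'\subset K_{0}'$ with $F'\difsim C_{0}'\Subset B_{R}\cap\overline{K_{0}'}$ and any $M>0$, construct a competitor for $C_{0}$ in $K_{0}$ that equals $F'\times(-M,M)$ on the slab $\{|x\cdot x_{0}|<M\}$ and equals $C_{0}$ outside, the two pieces being joined by thin cylindrical caps across the levels $x\cdot x_{0}=\pm M$; this competitor stays inside $\overline{K_{0}}$ precisely because $K_{0}=K_{0}'\times\R x_{0}$. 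Comparing the relative perimeters of $C_{0}$ and of this competitor over $(B_{R}\times\R)\cap\{|x\cdot x_{0}|<M+1\}$, the two caps contribute at most a constant $c(R)$ independent of $M$, while the cylindrical parts contribute $2M$ times the respective cross-sectional relative perimeters; the minimality of $C_{0}$ then yields $\p(C_{0}';B_{R}\cap K_{0}')\le\p(F';B_{R}\cap K_{0}')+c(R)/(2M)$, and $M\to\infty$ concludes.

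The hardest part will be Step 3: the capping-off construction must keep the modified set inside $\overline{K_{0}}$ — which is exactly where the product structure $K_{0}=K_{0}'\times\R x_{0}$ obtained in Step 2 enters — and the perimeter of the transition region has to be controlled by a constant independent of $M$. A secondary subtlety, already in Step 1, is to ensure that the sequence $t_{j}$ returned by the diagonal argument of Proposition \ref{thm:mincone} consists of genuine blow-up parameters for $C$ at $x_{0}$, so that the identity of Step 2 — which is an identity for each individual rescaling $C_{t}$ — is available along it.
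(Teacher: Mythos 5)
The paper does not actually give a proof of this lemma (it is omitted as ``a slight modification of \cite[Proposition 9.9]{Giu84book}''), and your argument is exactly that classical dimensional-reduction scheme, implemented with the paper's own tools (Lemma \ref{BlowUpConv}, Proposition \ref{thm:mincone} and the diagonal argument, plus the standard slab-comparison for the cross-section), so it is essentially the same approach as the one the paper points to. Your key identity in Step 2 is correct — for $y=(1+ts)x+s(1-t)x_{0}$ one has $ty+(1-t)x_{0}=(1+ts)\bigl(tx+(1-t)x_{0}\bigr)$, so the conical invariance of $C$ and $K$ about the origin does pass to translation invariance of the blow-up limits along $\R x_{0}$ — and the rest of the outline (trivial-case dichotomy, cone structure at $x_{0}$ from the double blow-up, capping argument with cost independent of $M$) is sound.
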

The proof of this lemma is omitted, as it can be obtained via a slight modification of \cite[Proposition 9.9]{Giu84book}.

\section{Proof of the main result}
We now dispose of all the necessary tools to prove Theorem \ref{thm:VS}. We argue by contradiction. By the results of the previous section, via a blow-up argument we can restrict the proof to the case of a domain $\Omega_{0}$ being a convex cone with vertex at the origin, and of a minimizer given by the intersection with $\Omega_{0}$ of a half-space whose boundary plane passes through the origin and meets $\de \Omega_{0}$ orthogonally. 

Before giving the proof of Theorem \ref{thm:VS}, we introduce a class of convex cones that will play a key role in the first part of the proof.
\begin{definition}
We say that a convex cone $C \subset \R^{3}$ is a pyramid provided there exist two wedges $W_{1}$, $W_{2}$ with orthogonally incident spines such that $C = W_{1} \cap W_{2}$.
\end{definition}
We note that a pyramid $C \subset \R^{3}$ is always a cone with vertex at the point $V_{0}$ in which the spines of the wedges intersect each other. Moreover, up to a rotation and a translation, there exist $a, \, b > 0$ such that
\begin{equation*}
    C = C_{a, b} := \{ x \in \R^{3} : \, x_{3} \geq \max\{ a |x_{1}|, b |x_{2}| \} \} \, .
\end{equation*}
\begin{proof}[Proof (of Theorem \ref{thm:VS}).] 
The proof is split into two steps. In the first, we show that a suitable plane through the vertex of a \emph{pyramid}, i.e., of a cone over a rectangle, cannot be area-minimizing in the pyramid itself. In the second, we employ a ``packing-box'' technique that allows us to reduce the case of a general convex cone to that of a suitably associated pyramid.

\textit{Step 1.}
Consider a pyramid cone $C_{a,b}$. We want to show that the plane 
	\[
	\pi_{0} = \{x\in \R^{3}:\ x_{1} = 0\} 
	\]
is not locally area-minimizing in $C_{a,b}$.
To do so, we build a family of competitors which improve the area of $\pi_{0}$ in $C_{a,b}$. For $\e \geq 0$, let $\pi_{\e} = \{x\in \R^{3}:\ x_{1} = \e\}$ and define
\begin{equation*}
R_{\e} := C_{a,b} \cap \{x\in \pi_{\e}:\ x_{3} \leq 1 \} \, , \qquad A_{\e} := \Hau^{2}(R_{\e}) \, .
\end{equation*}
We note that $R_{0}$ is a triangle in the plane $\pi_{0}$, and that $R_{\e}$ is a trapezium in the plane $\pi_{\e}$ whenever $0<\e<1$. Moreover, up to translations, $R_{\e}$ is obtained from $R_{0}$ by removing a triangle of area $\frac{a^{2}\e^{2}}{b}$, so that we have
\begin{equation} \label{eq:areaeps}
A_{\e} = A_{0} - \dfrac{a^{2}}{b} \e^{2} \, .
\end{equation}

The idea is now to connect the trapezium $R_{\e}$ with $\pi_{0} \cap C_{a,b}$ in order to obtain a local variation of $\pi_{0} \cap C_{a,b}$. We formulate the problem in the following way. Let $h>0$ and let $T_{h}$ be the trapezium defined as
    \begin{equation*}
        T_{h} = C_{a,b} \cap \{x\in \pi_{0}:\ 1 \leq x_{3} \leq 1 + h \} \, .
    \end{equation*}
    We immediately note that
    \begin{equation} \label{eq:areatrap}
        \Hau^{2}(T_{h}) = \dfrac{h (2+h)}{b} \, .
    \end{equation}
    We look for those smooth functions $\phi_{h}$ defined on the segment $\{ 1 \leq x_{3} \leq 1 + h \}$ satisfying the following conditions:
    \begin{equation} \label{eq:condphi}
        \phi_{h}(1) = 1 \, , \qquad \phi_{h}(1+h) = 0 \, .
    \end{equation}
    We observe that, looking at $\phi_{h}$ as a function of both variables $x_{2}$ and $x_{3}$ defined in $T_{h}$, the ruled surface
\begin{equation*}
G_{\e}(\phi_{h}) = \{ (\e \phi_{h}(x_{3}),x_{2},x_{3}): \, (x_{2},x_{3}) \in T_{h} \}
\end{equation*}
connects $R_{\e}$ with $\pi_{0} \cap C_{a,b}$. By suitably choosing $h$ and the map $\phi_{h}$, we claim that 
\begin{equation} \label{eq:condcompet}
A_{\e} + \Hau^{2}(G_{\e}(\phi_{h})) < A_{0} + \Hau^{2}(T_{h}) \, . 
\end{equation}
Using \eqref{eq:areaeps}, \eqref{eq:areatrap}, and the area formula, \eqref{eq:condcompet} turns out to be equivalent to
    \begin{equation} \label{eq:condcompet2}
        \iint_{T_{h}} \sqrt{1 + \e^{2} |\phi'_{h}(x_{3})|^{2}}\dd x_{2} \dd x_{3} < \dfrac{a^{2}}{b} \e^{2} + \dfrac{h (2 + h)}{b} \, .
    \end{equation}
In order to guarantee \eqref{eq:condcompet2}, we only need to impose that the second-order derivative at $0$ of the left-hand side is strictly smaller than the same derivative of the right-hand side. Differentiating both sides and applying Dominated Convergence, it suffices to choose $h$ and $\phi_h$ so that
    \begin{equation} \label{eq:condcompet4}
        \int_{1}^{1+h} t \, \phi'_{h}(t)^{2} \dd t < a^{2} \, .
    \end{equation}
    Then, for $\alpha > 0$, we choose
    \begin{equation*}
        \phi_{h} = \phi_{h,\alpha}(t) := \dfrac{(1 + h)^{\alpha}t^{-\alpha} - 1}{(1 + h)^{\alpha} - 1} \, .
    \end{equation*}
    We observe that $\phi_{h,\alpha}$ fulfills \eqref{eq:condphi}. Taking $\phi_{h} = \phi_{h,\alpha}$, condition \eqref{eq:condcompet4} becomes
    \begin{equation} \label{eq:condcompet5}
        \dfrac{\alpha}{2} \dfrac{(1+h)^{\alpha} + 1}{(1+h)^{\alpha} - 1} = \int_{1}^{1+h} t \, \phi'_{h,\alpha}(t)^{2} \dd t < a^{2} \, .
    \end{equation}
As $h\to +\infty$, the term on the left-hand side of \eqref{eq:condcompet5} tends to $\frac{\alpha}{2}$, hence it is enough to choose $\alpha < 2a^{2}$ and $h$ large enough to enforce \eqref{eq:condcompet5}. This ultimately proves \eqref{eq:condcompet} and shows that $\pi_{0}$ cannot be area-minimizing in $C_{a,b}$.

\textit{Step 2.} Let now $\Omega_{0}$ be a generic convex cone with vertex at the origin. Thanks to Theorem \ref{thm:geod}, and up to rotations, we may suppose that the boundary of the minimizer $E_{00}$ is the intersection of the plane $\pi_{0}$ with $\Omega_{0}$, hence there exists $b>0$ such that
\begin{equation*}
\de E_{00}\cap \Omega_{0}  = \{ (0,x_{2},x_{3}): \, x_{3} \geq b |x_{2}| \} \, .
\end{equation*}
Now, by Theorem \ref{thm:geod} we have
\[
\Omega_{0}\subset W_{1} := \{x\in \R^{3}:\ x_{3}\ge b|x_{2}|\}\,.
\] 
Since the origin is an isolated vertex for $\Omega_{0}$, it is not possible that $\de\Omega_{0}$ contains the whole line $\{(t,0,0):\ t \in \R\}$, hence there must exist $a>0$ such that the pyramid $C_{a,b}$ verifies either 
\begin{equation}\label{eq:mezzapiramide1}
\Omega_{0}\cap \{x_{1}\ge 0\}\ \subset\ C_{a,b}\cap \{x_{1}\ge 0\}
\end{equation}
or
\begin{equation}\label{eq:mezzapiramide2}
\Omega_{0}\cap \{x_{1}\le 0\}\ \subset\ C_{a,b}\cap \{x_{1}\le 0\}\,.
\end{equation}
We can assume for instance that \eqref{eq:mezzapiramide1} holds true, otherwise we simply flip the argument. We take $\e>0$ and set
\begin{equation*}
\hat{R}_{\e} := \Omega_{0} \cap \{x\in \pi_{\e}:\ x_{3} \leq 1 \} \, , \qquad \hat{A}_{\e} := \Hau^{2}(\hat{R}_{\e}) \, .
    \end{equation*}
    With the choice of suitable values $h$ and $\alpha$, we already know that the connection map $\phi_{h,\alpha}$ constructed in the proof of Theorem \ref{thm:geod} satisfies
\begin{equation*}
A_{\e} + \Hau^{2}(G_{\e}(\phi_{h,\alpha})) < A_{0} + \Hau^{2}(T_{h}) \end{equation*}
whenever $\e$ is small enough. Finally, we observe that
\begin{align*}
\hat{A}_{\e} + \Hau^{2}(G_{\e}(\phi_{h,\alpha}) \cap \Omega_{0}) &\leq A_{\e} + \Hau^{2}(G_{\e}(\phi_{h,\alpha})) \\
\hat{A}(0) &= A(0)\,,
\end{align*}
which shows that $\pi_{0}$ cannot be a minimizer in $\Omega_{0}$. This concludes the proof of Theorem \ref{thm:VS}.
\end{proof}

\end{document}